\documentclass[12pt,a4paper]{article}
\usepackage{graphicx}
\usepackage{amsmath}
\usepackage{amssymb}
\usepackage{amsfonts}
\usepackage{amsthm}
\usepackage{hyperref}
\numberwithin{equation}{section}

%%%%%%%%%%%%%%%%%%%%%%%%%%%%%%%%%%%%%%%%%%%% Local definitions
\renewcommand\Im{\mathop{\rm Im}}

\def\func#1{\mathop{\rm #1}}
\newcommand\ipart[1]{{\left\lfloor{#1}\right\rfloor}}
%%%%%%%%%%%%%%%%%%%%%%%%%%%%%%%%%%%%%%%%%%%%%%%%%%%%%%%%%%%%%%%%%%
\newtheorem{theorem}{Theorem}[section]
\newtheorem{definition}[theorem]{Definition}
\newtheorem{lemma}[theorem]{Lemma}
\newtheorem{proposition}[theorem]{Proposition}
\newtheorem{remark}[theorem]{Remark}
\begin{document}

\title{Unique resonant normal forms for area preserving maps at
an elliptic fixed point}

\author{Vassili Gelfreich${}^1$\footnote{The author thanks
Dr.~N.~Br\"annstr\"om for the help with preparation of the manuscript.
This work was partially supported by a grant from the Royal Society.}
\and
Natalia Gelfreikh${}^2$
\\[24pt]
${}^1$\small Mathematics Institute, University of Warwick,\\
\small Coventry, CV4 7AL, UK\\
\small E-mail: v.gelfreich@warwick.ac.uk\\[12pt]
${}^2$\small Department of Higher Mathematics and Mathematical Physics,\\
\small Faculty of Physics, St.Petersburg State University, Russia\\
\small E-mail: gelfreikh@mail.ru}

\maketitle

\begin{abstract}
We construct a resonant normal form for an area-preserving map
near a generic resonant elliptic fixed point.
The normal form is obtained by a simplification of a formal interpolating
Hamiltonian. The resonant normal form is unique and therefore provides
the formal local classification for area-preserving maps with
the elliptic fixed point.  The total number of formal invariants is infinite.
We consider the cases of weak (of order $n\ge5$) and strong
(of order $n=3,4$) resonances.
We also construct unique normal forms for analytic families of area-preserving maps.

We note that our constructions involve non-linear grading functions.
\end{abstract}

\noindent
{Keywords: area-preserving maps, unique normal form, formal interpolation}

\newpage

\section{Introduction}

Normal forms provide an important tool for the study of dynamical systems (see
e.g.~\cite{AKN,Kuznetsov}) as they can be used to achieve a substantial simplification of local
dynamics. The normal form theory uses canonical changes of variables to transform a map into a
simpler one called a normal form. Usually the normal form is more symmetric than the original map
and (at least in the setup of the current paper) is integrable.

Classical normal forms are not always unique and their further reduction has been studied by
various methods in order to derive unique normal forms~\cite{BS1992,BF2007,KOW1996,CWY2005}. Our
approach uses a grading function in order to simplify manipulations with formal series in several
variables. Unlike \cite{KOW1996} our grading functions are mostly non-linear.

In this paper we derive the unique normal forms for
area-preserving maps at a generic elliptic
resonant fixed point.
The symmetry of the normal form is induced by the Jacobian matrix of the map
and strongly depends on the type of the fixed point.
Let's review some results of the classical normal form theory.
The leading orders of the normal forms
are described in the classical book \cite{AKN}.

\medskip

Let $F_0:\mathbb{R}^{2}\to \mathbb{R}^{2}$ be an area-preserving map
with a fixed point at the origin
\begin{equation*}
F_{0}(0)=0.
\end{equation*}%
Since $F_{0}$ is area-preserving $\det DF_{0}(0)=1$. Therefore
we can denote the two eigenvalues of the Jacobian matrix
$DF_{0}(0)$ by $\mu $ and $\mu ^{-1}$.
These eigenvalues are often called {\em multipliers\/}
of the fixed point.  A fixed point is called
\begin{itemize}
\item {\em hyperbolic\/} if $\mu\in\mathbb{R}$ and $\mu\ne\pm1$;
\item {\em elliptic\/} if $\mu\in\mathbb{C}\setminus\mathbb{R}$ (in this case $|\mu|=1$);
\item {\em parabolic\/} if $\mu=\pm1$.
\end{itemize}
We also note that the word ``parabolic" is often used
for a generic fixed point with $\mu=1$.

\textbf{1. Hyperbolic fixed point.}
From the viewpoint of the normal form theory this case is
exceptional since the map can be transformed into
its Birkhoff normal form by an analytic change of variables.
Indeed, Moser \cite{Moser1956,Siegel1957} proved that
there is a canonical ({\em i.e.,\/} area- and orientation-preserving)
analytical change of variables such that in a neighbourhood
of the origin $F_0$ takes the form $(u,v)\mapsto(u_1,v_1)$ where
\begin{equation*}
\left\{
\begin{array}{l}
u_{1}=a(uv)u \\[6pt]
v_{1}=\displaystyle\frac{v}{a(uv)}
\end{array}
\right.
\end{equation*}
and $a$ is an analytic function of a single variable $uv$. Moreover,
$a(0)=\mu$.
It is interesting to note that although the change
of the variables is not unique the function $a$ is
uniquely defined by the map $F_0$. The normal form is integrable
since $u_1v_1=uv$. Therefore a neighbourhood of the fixed point
is foliated into invariant lines as shown in Figure \ref{Fig:BNF_Hyp}.
\begin{figure}
\includegraphics[width=\textwidth]{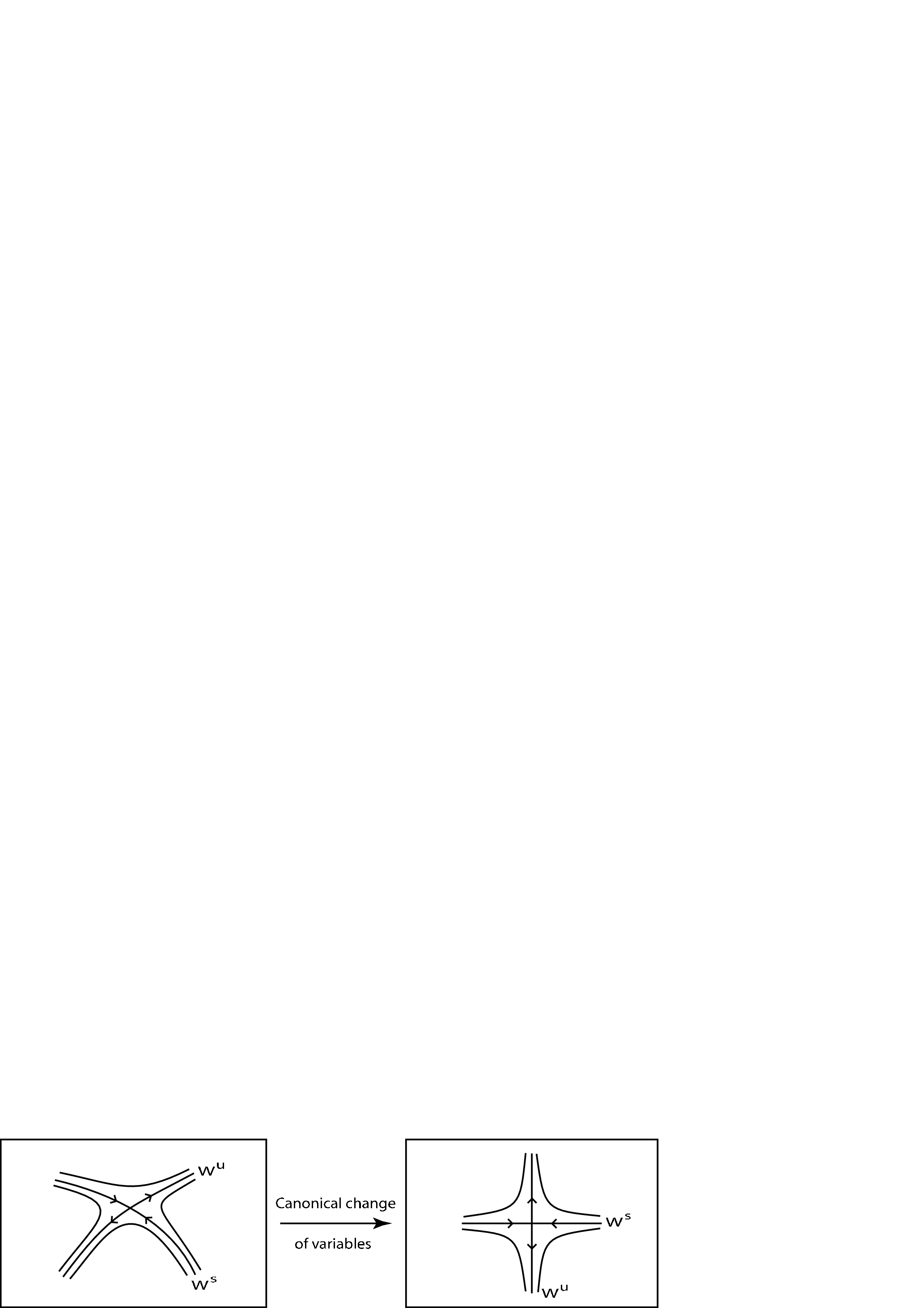}
\caption{Normal form coordinates in a neighbourhood of a hyperbolic fixed point.\label{Fig:BNF_Hyp}}
\end{figure}
Moreover,
in a neighbourhood of the fixed point $F_0$ coincides with the time-one map
of an analytic Hamiltonian system with one degree of freedom:
\[
F_0=\Phi^1_H\,.
\]
In the coordinates $(u,v)$ the Hamiltonian $H$ is a function of the product $uv$ only
and is given explicitly by the following integral:
\[
H(uv)=\int^{uv}\log a \,.
\]
We note that $H$ is a local integral only and hence does
not imply integrability of the map $F_0$. On the other hand
it provides a powerful tool for studying dynamics of area-preserving maps
and is very useful in the problems related to separatrices
splitting (see e.g. \cite{FS1990}).

\medskip

\textbf{2. Elliptic fixed point.} As the map is real-analytic the second
multiplier $\mu ^{-1}=\mu^*$, where $\mu^*$ is the complex conjugate of $\mu$.
Consequently the multipliers of an elliptic fixed point
belong to the unit circle $|\mu|=1$. Note the assumption $\mu\notin\mathbb{R}$
excludes $\mu =\pm 1$.
There is a linear area-preserving change of variables
such that the Jacobian of $F_0$ takes the form of
a rotation:
\begin{equation}\label{Eq:rotation}
DF_{0}(0)=R_{\alpha }=\left(
\begin{array}{cc}
\cos \alpha & -\sin \alpha \\
\sin \alpha & \cos \alpha%
\end{array}%
\right)
\end{equation}
where the rotation angle $\alpha$ is related to the multiplier: $\mu=e^{i\alpha}$.
The following theorem is a classical result
of the normal form theory.

\begin{theorem}\label{Thm:brnf}
(Birkhoff normal form) There exists a formal
canonical change of variables $\Phi $ such that
the map
\begin{equation*}
N=\Phi \circ F_0\circ \Phi ^{-1}
\end{equation*}%
commutes with the rotation $R_\alpha$:
\begin{equation*}
N\circ R_{\alpha }=R_{\alpha }\circ N.
\end{equation*}
\end{theorem}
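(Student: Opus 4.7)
The plan is to construct $\Phi$ inductively as a formal composition of time-one maps of Hamiltonian flows generated by homogeneous polynomial Hamiltonians of strictly increasing degree. After diagonalising $R_\alpha$ by passing to the complex coordinates $\zeta_1 = x+iy$, $\zeta_2 = x-iy$, the rotation acts as $(\zeta_1,\zeta_2)\mapsto(\mu\zeta_1,\mu^{-1}\zeta_2)$; the monomial $\zeta_1^p\zeta_2^q$ is an eigenvector of the conjugation operator $P\mapsto R_\alpha^{-1}\circ P\circ R_\alpha$ with eigenvalue $\mu^{p-q}$, and a formal map commutes with $R_\alpha$ if and only if its Taylor series involves only the \emph{resonant} monomials $\zeta_1^p\zeta_2^q$ with $\mu^{p-q}=1$. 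The whole construction amounts to eliminating the non-resonant monomials from the expansion of $F_0$ order by order.

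First expand $F_0 = R_\alpha + \sum_{k\ge 2}F_0^{(k)}$ with $F_0^{(k)}$ homogeneous of degree $k$. Suppose inductively that homogeneous Hamiltonians $S_3,\dots,S_m$ (of the indicated degrees) have been chosen so that $\Phi_m = \phi^1_{S_m}\circ\cdots\circ\phi^1_{S_3}$ is a formal symplectic diffeomorphism and $N^{(m)} := \Phi_m\circ F_0\circ\Phi_m^{-1}$ commutes with $R_\alpha$ through order $m-1$. Setting $\Phi_{m+1} = \phi^1_{S_{m+1}}\circ\Phi_m$ for an unspecified homogeneous polynomial $S_{m+1}$ of degree $m+1$, and using that $X_{S_{m+1}}$ has degree $m$, so that $\phi^1_{S_{m+1}}=\mathrm{id}+X_{S_{m+1}}$ modulo terms of degree $\ge m+1$, a direct expansion yields
\[
N^{(m+1)} = N^{(m)} + \bigl(X_{S_{m+1}}\circ R_\alpha - R_\alpha\,X_{S_{m+1}}\bigr) \quad\text{modulo degree $\ge m+1$.}
\]
Thus all terms of degree $<m$ are unchanged, and one has to choose $S_{m+1}$ so that the new degree-$m$ term of $N^{(m+1)}$ commutes with $R_\alpha$. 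Written out in the monomial basis $S_{m+1}=\sum c_{p,q}\zeta_1^p\zeta_2^q$, this cohomological equation diagonalises with eigenvalue proportional to $\mu^{p-q}-1$ on the $(p,q)$-component; hence it is invertible on the span of non-resonant monomials ($\mu^{p-q}\ne 1$), while the resonant part of the obstruction is left untouched and automatically commutes with $R_\alpha$. Consequently $S_{m+1}$ can always be solved for and the induction continues.

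The infinite formal composition $\Phi=\cdots\circ\phi^1_{S_4}\circ\phi^1_{S_3}$ defines a formal symplectic diffeomorphism (well-defined because at each finite order only finitely many factors contribute), and by construction $N=\Phi\circ F_0\circ\Phi^{-1}$ commutes with $R_\alpha$ to all orders. Since the argument is purely formal, no small-divisor or convergence question arises; the main technical point is simply the correct identification of the cohomological operator and of its kernel, which is immediate from the diagonalisation above. In the non-resonant case $\alpha/(2\pi)\notin\mathbb{Q}$ the resonant monomials reduce to those with $p=q$, yielding the classical Birkhoff normal form as a formal function of the action $\zeta_1\zeta_2=x^2+y^2$; in the resonant case $\mu^n=1$ the kernel is strictly larger and the normal form correspondingly richer, but the argument is unchanged.
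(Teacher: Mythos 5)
The paper itself does not prove Theorem~\ref{Thm:brnf}: it quotes it as a classical result and refers to the literature. So your proposal can only be judged on its own merits, against the machinery the paper develops elsewhere (Lemma~\ref{Lemma:areapres}, Lemma~\ref{Lemma_real} and the proof of Theorem~\ref{Theorem_Interpol}), which is precisely what is needed to repair it. Your overall scheme is the standard one and the expansion
\[
N^{(m+1)} = N^{(m)} + \bigl(X_{S_{m+1}}\circ R_\alpha - R_\alpha\,X_{S_{m+1}}\bigr)\quad\text{mod degree }\ge m+1
\]
is correct, as is the diagonalisation of the conjugation action on monomials in the complex coordinates.

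There is, however, a genuine gap at the solvability of the cohomological equation. Because you (rightly) insist that $\Phi$ be canonical and therefore generate each step by a Hamiltonian flow, the unknown is a \emph{scalar} $S_{m+1}$, living in a space of dimension $m+2$, whereas the obstruction --- the non-resonant part of the degree-$m$ term of $N^{(m)}$ --- is a priori an arbitrary homogeneous vector-valued polynomial, living in a space of dimension $2(m+1)$. The image of $S\mapsto X_S\circ R_\alpha - R_\alpha X_S$ consists only of fields of the form $R_\alpha X_W$ with $W$ a non-resonant scalar, i.e.\ of non-resonant \emph{divergence-free} fields; you never check that the obstruction lies in this image, and indeed your argument nowhere uses the hypothesis that $F_0$ is area-preserving, which is exactly where this must come from. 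The missing step is: write $N^{(m)}=R_\alpha\circ(\mathrm{id}+g)$; area preservation gives $\operatorname{div} g_m=\sum_{k=2}^{m-1}\{\bar g_k,g_{m-k+1}\}$ (equation (\ref{eqnCor_RealPart}) of the paper), which is determined by the already-normalised, resonant, lower-order terms and is therefore itself resonant; hence the non-resonant part of $g_m$ is divergence-free and, by the analogue of Lemma~\ref{Lemma_real}, is a Hamiltonian vector field $X_W$ with $W$ non-resonant --- only then can $S_{m+1}$ be solved for. One must also note, as in Lemma~\ref{Lemma_real}, that the solution can be taken real-valued (the eigenvalues at $(p,q)$ and $(q,p)$ are complex conjugates of one another), so that $\phi^1_{S_{m+1}}$ is a real canonical map. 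A smaller slip: the resonance condition for a \emph{component of the map} is $\mu^{p-q}=\mu$, while $\mu^{p-q}=1$ is the condition for a \emph{scalar} such as $S_{m+1}$ or the interpolating Hamiltonian; your text conflates the two.
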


The map $N$ is called a {\em Birkhoff normal form\/} of $F$ (see e.g.
\cite{AKN,Birkhoff1927}).
The map $R_{-\alpha}\circ N$ is tangent to identity, {\em i.e.}, its
Taylor series starts with the identity map. A tangent to identity
map can be formally represented as a time-one
map of an autonomous Hamiltonian system: there
is a formal Hamiltonian $H$ such that
\begin{equation}
N=R_\alpha\circ \Phi_H^1\,,
\end{equation}
where $\Phi_H^1$ is a formal time-one map. For the sake of completeness
we will provide a proof of this statement later.
The Hamiltonian inherits the symmetry of the normal form:
\begin{equation}
H\circ R_\alpha=H\,.
\end{equation}
The symmetry of the normal form has an important corollary:
 $H$ is a formal integral of $N$:
\[
H\circ N=H\circ R_\alpha\circ \Phi_H^1=H\circ \Phi_H^1=H\,.
\]
Therefore the Birkhoff normal form $N$ is integrable.
Coming back to the original variables we
obtain a formal integral of the original map.

Usually the series involved in the construction
of the normal form do not converge.
On the other hand it is possible to construct an analytic change of
variables which transforms the map into the normal form
up to a reminder of an arbitrarily high order.
In other words,
for any $p>0$ there is a canonical analytic change of coordinates
$\tilde \Phi_p$ such that its Taylor series
coincides with the formal series $\Phi$ up to the order $p$.
Then Taylor series of
$\tilde N_p=\tilde\Phi_p\circ F_0\circ\tilde\Phi_p^{-1}$
coincides with the formal series $N$ up to the order $p$,
and the map $\tilde N_p$
is in the normal form up to a reminder of order $p+1$, {\em i.e.,}
\[
\tilde N_p\circ R_\alpha-R_\alpha\circ\tilde N_p=O(r^{p+1})
\]
where $r=\sqrt{x^2+y^2}$. Moreover,
\[
\tilde N_p=\Phi_{H_p}^1+O(r^{p+1})
\]
where $H_p$ is a polynomial Hamiltonian obtained by
neglecting all terms of orders higher than $p$
in the formal series $H$.

Alternatively, it is possible to construct a smooth ($C^\infty$)
change of variables such that the remainders are flat.

The transformation to the normal form is not unique.
Traditionally this freedom is used to eliminate
some coefficients from the normal form map $N$.
We will take a slightly different point of
view and simplify the series for
the formal interpolating Hamiltonian.
In this way the problem is reduced to a study of a normal form
for a Hamiltonian system with symmetry. We note that this
is a classical subject
and lower order normal forms can be found in the literature
(see for example~\cite{GS1987,BGSTT1993}).

Our goal is to achieve a substantial simplification for all orders
and to derive the unique normal form for the generic case.
The results depend on the rotation angle $\alpha$ defined in (\ref{Eq:rotation}).

\begin{definition}
A fixed point is called {\em resonant} if there exists $n\in \mathbb{N}$ such that
$\mu ^{n}=1$. The least positive $n$ is called the {\em order of the resonance}.
If the fixed point is not resonant we call it\/ {\em non-resonant}.
A resonant fixed point is called {\em strongly resonant} if
$n\le4$. Otherwise it is called {\em weakly resonant}.
\end{definition}

%If the fixed point is resonant we may assume
%$$
%\alpha=\alpha _{n}=\frac{2\pi k}{n}
%$$
%for some integer $k$, $1\le k<n$.
The resonances of orders one and two are related
to a parabolic and not elliptic fixed point.

It is convenient to introduce the symplectic polar coordinates
$(I,\varphi)$ by
\begin{eqnarray*}
x&=&\sqrt{2I}\cos\varphi\,,\\
y&=&\sqrt{2I}\sin\varphi\,.
\end{eqnarray*}
If the fixed point is not resonant the normal form is a rotation:
\[
N=R_{\omega(I)}\qquad
\mbox{where}\qquad \omega(I)=\alpha+\sum_{k\ge1}\omega_kI^k\,.
\]
Note that the rotation angle depends on the action $I$.
The coefficients $\omega_k$ are defined uniquely and provide a
full set of formal invariants for $F_0$. Writing down Hamiltonian
equations and comparing their solution with the map $R_{-\alpha}\circ N=R_{\omega(I)-\alpha}$,
we can easily check that the formal interpolating
Hamiltonian is defined by
\[
\frac{\partial H}{\partial I}=\omega(I)-\alpha\,,
\qquad
\frac{\partial H}{\partial \varphi}=0
\,.
\]
Therefore it has the form
\[
H(I,\varphi)=\sum_{k\ge1}\frac{\omega_k}{k+1}\,I^{k+1}\equiv I^2 A(I)\,.
\]
We see that the coefficients of the series $A(I)$
are defined uniquely and can be used to formally classify the
maps instead of $\omega_k$.

In the case of a resonant fixed point the leading order of
the Hamiltonian $H$ has the form~\cite{AKN}
\[
H(I,\varphi)=I^2 A(I)+I^{n/2}B(I)\cos n\varphi +O(I^{1+n/2})
\]
where $A$ and $B$ are polynomial in $I$. In this paper we will show that
this form can be extended up to all orders and study the uniqueness
of the coefficients. The following theorem is the main result of the paper.

\begin{theorem}\label{Thm:main}
If $F_0$ is a smooth ($C^\infty$ or analytic) area preserving map with a resonant
elliptic fixed point at the origin, then there is a formal Hamiltonian $H$ and
formal canonical change of variables which conjugates $F_0$ with $R_\alpha\circ\Phi^1_H$.
Moreover, $H$ has the following form:
\begin{itemize}
\item
if $n\ge4$ and $A(0)B(0)\ne0$
\begin{equation}\label{Eq:unf}
H(I,\varphi)=I^2 A(I)+I^{n/2}B(I)\cos n\varphi
\end{equation}
where
\begin{equation}\label{Eq:ABn}
A(I)=\sum_{k\ge0}a_kI^k\,,\qquad
B(I)=\sum_{k\ge0}b_kI^{2k}\,.
\end{equation}
\item
If $n=3$ and $B(0)\ne0$
\begin{equation}\label{Eq:u3f}
H(I,\varphi)=I^3 A(I)+I^{3/2}B(I)\cos 3\varphi
\end{equation}
where
\begin{equation}\label{Eq:AB3}
A(I)=\sum_{\substack{k\ge0\\ k\ne 2\pmod 3}}a_kI^k\,,\qquad
B(I)=\sum_{\substack{k\ge0\\ k\ne 2\pmod 3}}b_kI^k\,.
\end{equation}
\end{itemize}
The coefficients of the series $A$ and $B$ are defined uniquely by the map $F_0$
provided the leading order is normalised to ensure $b_0>0$.
\end{theorem}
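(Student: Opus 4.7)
The plan is to start from Theorem \ref{Thm:brnf} and conjugate $F_0$ to $R_\alpha\circ\Phi^1_H$ with $H$ invariant under $R_\alpha$. Since $\alpha=2\pi p/n$ with $\gcd(p,n)=1$, $R_\alpha$-invariance is equivalent to $2\pi/n$-periodicity of $H(I,\varphi)$ in $\varphi$. Combined with the regularity of $H$ as a formal power series in the Cartesian variables $(x,y)$, this forces a Fourier expansion of the form
\begin{equation*}
H(I,\varphi)=H_0(I)+\sum_{m\ge 1}I^{mn/2}\bigl(P_m(I)\cos mn\varphi + Q_m(I)\sin mn\varphi\bigr),
\end{equation*}
where $H_0$ and $P_m,Q_m$ are formal series in $I$ and the minimal power $I^{mn/2}$ in each harmonic is dictated by regularity. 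It then remains to use the residual gauge freedom to kill all $Q_m$, all $P_m$ with $m\ge 2$, and to reduce $H_0$ and $P_1$ to the prescribed form.

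The residual freedom consists of the rotations $R_\beta$ (to be used once, to set $Q_1(0)=0$ and $P_1(0)=b_0>0$) and the canonical transformations $\Phi^1_K$ with $K$ itself $R_\alpha$-invariant. Under $\Phi^1_K$ the Hamiltonian transforms by pullback, $H\mapsto H\circ\Phi^{-1}_K = H+\{K,H\}+\tfrac12\{K,\{K,H\}\}+\cdots$, and I would organise the simplification as an inductive solution of homological equations of the form $\{K_j,H_{\mathrm{lead}}\}=\Delta_j$, where $H_{\mathrm{lead}}$ is the leading part of $H$ in an appropriately chosen grading and $\Delta_j$ is the obstruction to be eliminated at the current order. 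The central step will be to fix the grading function on monomials so that the complementary "normal form space" consists exactly of the terms admitted by (\ref{Eq:unf})--(\ref{Eq:AB3}), and to verify that $\{\,\cdot\,,H_{\mathrm{lead}}\}$ is a bijection between the appropriate complement of its kernel and that space on every graded level.

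For $n\ge 5$ I expect the standard grading by degree in $(x,y)$ to suffice: the leading part is simply $a_0 I^2$, the operator $\{\,\cdot\,,a_0 I^2\}=-2a_0 I\,\partial_\varphi$ is invertible on every non-radial Fourier harmonic, so one can kill all $Q_m$ and all $P_m$ with $m\ge 2$; a short computation with $K=\kappa(I)\sin n\varphi$ using $\{\kappa\sin n\varphi,I^2A\}\propto I\kappa(2A+IA')\cos n\varphi$ then shows that one can eliminate the odd-power terms of $P_1$, leaving $B(I)=\sum b_k I^{2k}$. The hard part is the strongly resonant case $n=3,4$, where the resonant term $I^{n/2}\cos n\varphi$ has order comparable to or lower than $I^2$, so the integrable and resonant parts are coupled already at leading order and the linear degree grading fails. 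Here I would introduce the non-linear grading advertised in the abstract, tuned so that $I^{n/2}\cos n\varphi$ and $I^2$ acquire matching weights. For $n=3$ the leading Hamiltonian is $b_0 I^{3/2}\cos 3\varphi$, and computing the kernel and image of $\{\,\cdot\,,b_0 I^{3/2}\cos 3\varphi\}$ on each graded piece should produce exactly the exclusion $k\not\equiv 2\pmod 3$ in (\ref{Eq:AB3}). The principal technical obstacle will be establishing this bijectivity of the homological operators for $n=3$ and $n=4$, since the leading operator there genuinely depends on $\varphi$ and its inversion is considerably more delicate than the purely radial computation available for $n\ge 5$.
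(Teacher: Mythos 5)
Your overall strategy (Birkhoff normal form, formal interpolating Hamiltonian, then order-by-order simplification via homological equations in a chosen grading, with a rotation to fix $b_0>0$) matches the paper's. The genuine gap is in the case $n\ge5$, where you keep the linear grading by degree and take $H_{\mathrm{lead}}=a_0I^2$. The homological operator $\{\,\cdot\,,a_0I^2\}=2a_0I\,\partial_\varphi$ never sees $b_0$, and on the first harmonic it sends $\kappa(I)\sin n\varphi$ to $2a_0nI\kappa(I)\cos n\varphi$; taking $\kappa(I)=cI^{n/2+j-1}$ (an admissible generator for every $j\ge1$, since $I^{n/2+j-1}\sin n\varphi$ is a polynomial in $z,\bar z$) this reaches \emph{every} term $I^{n/2+j}\cos n\varphi$ with $j\ge1$, odd and even alike. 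Taken at face value your scheme would therefore remove all of $B$ except $b_0$, which contradicts the theorem: the coefficients $b_k$, $k\ge1$, are invariants. The even-powers-only structure of $B$ in (\ref{Eq:ABn}) is an obstruction phenomenon visible only when the resonant term $b_0(z^n+\bar z^n)$ is included in the leading Hamiltonian, so that the feedback $\{K,b_0I^{n/2}\cos n\varphi\}$ is accounted for at the same graded level. That is exactly what the paper's non-linear grading $\delta(z^k\bar z^l)=\frac12(k+l)-\frac{n-4}{2n}|k-l|$ accomplishes for \emph{all} $n\ge4$, not only for the strong resonances: it assigns $\delta$-order $2$ to both $z^2\bar z^2$ and $z^{\pm n}$, the homological operator becomes $\{\,\cdot\,,a_0z^2\bar z^2+b_0(z^n+\bar z^n)\}$ with a triangular matrix, and its cokernel alternates between dimension $2$ (radial term plus one $\cos n\varphi$ term) at odd levels and dimension $1$ (radial term only) at even levels --- this alternation is precisely the source of $B(I)=\sum_k b_kI^{2k}$, and it uses both $a_0\ne0$ and $b_0\ne0$, i.e.\ the hypothesis $A(0)B(0)\ne0$ that your $n\ge5$ argument never invokes.

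A second gap: the uniqueness claim --- that the surviving coefficients are invariants of $F_0$ rather than artefacts of the particular reduction --- is not addressed. It does not follow automatically from landing in a complement of the image: the reducing transformations are non-unique because the homological operator has a nontrivial kernel (generated by powers of the leading Hamiltonian) at every even level. One must show that any two reduced Hamiltonians conjugated by a tangent-to-identity canonical map coincide termwise. The paper does this by induction on the lowest $\delta$-order $p$ of the conjugating generator $\chi$: both reduced Hamiltonians lie in the complement at level $p+1$, so $L_p[\chi]_p=0$, hence $[\chi]_p$ is a power of $h_2$, whose flow fixes $\tilde h$; composing with its inverse flow and re-interpolating (via the formal interpolation theorem) produces a generator starting at strictly higher order, and the induction closes. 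You will need an argument of this kind to justify the word ``uniquely'' in the statement.
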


Note that the sign of $b_0$ changes after a substitution $\varphi\mapsto\varphi+\frac\pi n$.
In the theorem the change of the variables is note unique.

The theorem provides a formal classification for the generic maps
with respect to formal canonical changes of variables.

Theorem~\ref{Thm:main} follows from Propositions~\ref{Pro:n5}, \ref{Pro:n4} and~\ref{Pro:n3}
which state equivalent results in complex coordinates for $n=3$, $n=4$ and $n\ge5$ respectively.

\subsection*{Unique normal forms for analytic families}
Now instead of an individual area-preserving map $F_0$
we consider an analytic family of area-preserving maps $F_{\varepsilon }$.
We assume that the origin is an elliptic fixed point of $F_0$.
The implicit function theorem implies that there is $\varepsilon_0>0$ such that
in a neighbourhood of the origin
$F_\varepsilon$ has an elliptic fixed point for all $|\varepsilon|<\varepsilon_0$.
Without loosing in generality we can assume that the fixed point has already been
moved to the origin.

Let $\alpha$ be the rotation angle defined by $DF_0(0)$.
It is well known that the family can be transformed to the normal form
in a way similar to an individual map.
In other words there is a formal Hamiltonian $\chi_\varepsilon$ such
that
\begin{equation*}
N_{\varepsilon }=\Phi _{\chi _{\varepsilon }}^{-1}\circ F_{\varepsilon
}\circ \Phi _{\chi _{\varepsilon }}^{1}
\end{equation*}
is in the normal form, {\em i.e.},
$N_{\varepsilon }\circ R_\alpha=R_\alpha \circ N_{\varepsilon }$.
The normal form can be formally interpolated by an autonomous
Hamiltonian flow:
\begin{equation*}
N_{\varepsilon }=R_{\alpha }\circ \Phi _{H_{\varepsilon }}^{1}
\end{equation*}
where $H_{\varepsilon }$ is a symmetric formal Hamiltonian
\begin{equation*}
H_{\varepsilon }=H_{\varepsilon }\circ R_{\alpha }\,.
\end{equation*}
It is well known that these statements can be proved by an appropriate modification
of the arguments used for the case of individual maps. The leading order
of the normal form has the form
\[
H_\varepsilon(I,\varphi)=I A(I,\varepsilon)+I^{n/2}B(I,\varepsilon)\cos n\varphi+O(I^{n/2+1})\,.
\]
It is also well known the changes of variables involved in the
construction of the normal forms are not unique.
This freedom can used to provide further simplifications
of the normal form Hamiltonian $H_\varepsilon$.

\begin{theorem}\label{Thm:qr}
If $F_\varepsilon$ is a smooth ($C^\infty$ or analytic) family of area preserving maps
such that $F_0$ has a resonant elliptic fixed point at the origin,
then there is a formal Hamiltonian $H_\varepsilon$ and
formal canonical change of variables which conjugates $F_\varepsilon$ with
$R_\alpha\circ\Phi^1_{H_\varepsilon}$.
Moreover, $H_\varepsilon$ has the following form:
\begin{equation}\label{Heps}
H_\varepsilon(I,\varphi)=I A(I,\varepsilon)+I^{n/2}B(I,\varepsilon)\cos n\varphi
\end{equation}
where $H_\varepsilon$ for $\varepsilon=0$
coincides with $H$ of Theorem~\ref{Thm:main} and
\begin{itemize}
\item
if $n\ge4$ and $\partial_I A(0,0)\cdot B(0,0)\ne0$
\begin{equation}
A(I,\varepsilon)=\sum_{k,m\ge0}a_{k,m}I^k\varepsilon ^m\,,\qquad
B(I,\varepsilon)=\sum_{k,m\ge0}b_{k,m}I^{2k}\varepsilon ^m\,, \qquad a_{0,0}=0,
\end{equation}
\item
if $n=3$ and $B(0,0)\ne0$
\begin{eqnarray}
A(I,\varepsilon)&=&\sum_{\substack{k,m\ge0\\ k\ne 1\pmod 3}}a_{k,m}I^k\varepsilon^m\,,\qquad
a_{0,0}=a_{1,0}=0\,,\\
B(I,\varepsilon)&=&\sum_{\substack{k,m\ge0\\ k\ne 2\pmod 3}}b_{k,m}I^k\varepsilon^m\,.
\end{eqnarray}
\end{itemize}
The coefficients of the series $A$ and $B$ are defined uniquely by the map $F_\varepsilon$
provided the leading order is normalized to ensure $b_{00}>0$.
%(In all cases $a_{1,0}=0$.)
\end{theorem}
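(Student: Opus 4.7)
The plan is to adapt the proof of Theorem~\ref{Thm:main} (i.e., Propositions~\ref{Pro:n5}, \ref{Pro:n4}, \ref{Pro:n3}) to the parameterized setting by treating $\varepsilon$ as an inert formal variable. All changes of variables $\Phi_\varepsilon$ will be taken to depend analytically (resp.\ smoothly) on $\varepsilon$ and to preserve the symplectic form on the $(x,y)$-plane for each fixed $\varepsilon$; equivalently, the generating Hamiltonians $\chi_\varepsilon$ are formal power series in $(I,\varphi)$ whose coefficients lie in $\mathbb{C}\{\varepsilon\}$ (or $C^\infty$). The target is the $\varepsilon$-dependent normal form~(\ref{Heps}).

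First I would carry out the parameterized Birkhoff reduction: by a standard adaptation of Theorem~\ref{Thm:brnf}, there is an $\varepsilon$-dependent formal canonical change of variables $\Phi_{\chi_\varepsilon}$ such that $F_\varepsilon$ is conjugate to $R_\alpha\circ\Phi^1_{H_\varepsilon}$ with $H_\varepsilon\circ R_\alpha=H_\varepsilon$; this only uses the fact that $\alpha$ is fixed by $DF_0(0)$ and that the cohomological equations $\{L,\chi\}=\text{known}$ for the linearized operator $L=\mathrm{id}-R_\alpha^*$ have solutions in the non-resonant Fourier modes, independently of $\varepsilon$. At this stage one obtains a symmetric Hamiltonian of the form $H_\varepsilon=I A(I,\varepsilon)+I^{n/2}B(I,\varepsilon)\cos n\varphi+\dots$ where the dots collect higher resonant harmonics.

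Next I would repeat the further reduction carried out in the individual case (Propositions~\ref{Pro:n5}, \ref{Pro:n4}, \ref{Pro:n3}), now with all objects depending on $\varepsilon$. The resonant changes of variables are generated by Hamiltonians of the form $\chi_\varepsilon(I,\varphi)=\sum_j I^{*}\sin(jn\varphi)\cdot g_j(I,\varepsilon)$, and the cohomological equations at each step reduce to division by the leading twist $\partial_I A(0,0)$ (to cancel non-cosine resonant terms) or by the leading oscillation coefficient $B(0,0)$ (to cancel unwanted cosine terms). Because the non-degeneracy is imposed at $\varepsilon=0$, the denominators are units in $\mathbb{C}\{\varepsilon\}$ and the reduction propagates to all orders with the same grading (including the non-linear grading used in the $\varepsilon=0$ case). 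The bookkeeping determining which coefficients survive and which can be killed is identical to the individual case and yields exactly the index restrictions in~(\ref{Eq:ABn})--(\ref{Eq:AB3}), supplemented by $a_{0,0}=0$ for $n\ge4$ and $a_{0,0}=a_{1,0}=0$ for $n=3$, which encode the compatibility condition $H_\varepsilon|_{\varepsilon=0}=H$ from Theorem~\ref{Thm:main}.

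Uniqueness of the $a_{k,m},b_{k,m}$ would follow by the same argument as for Theorem~\ref{Thm:main}: if two normal forms of the shape~(\ref{Heps}) are conjugate by a canonical $\varepsilon$-dependent transformation, expanding in $\varepsilon$ and applying Theorem~\ref{Thm:main} to each $\varepsilon$-jet shows that all coefficients must coincide; the sign normalization $b_{00}>0$ fixes the residual $\mathbb{Z}_2$-freedom $\varphi\mapsto\varphi+\pi/n$. The main obstacle, as in the individual case, is to verify that the non-linear grading function used in Propositions~\ref{Pro:n5}--\ref{Pro:n3} remains compatible with the $\varepsilon$-dependent cohomological equations; concretely, one must check that the weights assigned to monomials $I^k\varepsilon^m \cos(jn\varphi)$ still make each reduction step act triangularly, so that no infinite cascade of corrections is generated. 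Once the weights are chosen so that $\varepsilon$ has weight $0$ (it is inert under the symplectic gauge action) but the non-degeneracy conditions are read off at the appropriate filtered level, the triangularity is inherited from the $\varepsilon=0$ case and the rest is formal.
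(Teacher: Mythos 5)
Your overall strategy -- redo the reduction of Propositions~\ref{Pro:n5}--\ref{Pro:n3} with $\varepsilon$-dependent coefficients, reusing the same homological operators and the same complements -- is indeed the paper's strategy. But there are two concrete gaps. First, the grading. You propose to give $\varepsilon$ weight $0$ and assert that ``the triangularity is inherited from the $\varepsilon=0$ case.'' It is not, and the obstruction is precisely the term you flag and then wave away: the normal form now contains $h_{111}\varepsilon z\bar z$ (equivalently $a_{0,1}I\varepsilon$ in (\ref{Heps}) -- note the leading factor is $IA(I,\varepsilon)$, not $I^2A$). With $\varepsilon$ of weight $0$ this monomial has lower $\delta$-order than $h_{2,0}=a_0z^2\bar z^2+b_0(z^n+\bar z^n)$, so the lowest-order part of the Hamiltonian is no longer $h_{2,0}$ and the homological operator you want to invert is not the one generated by the leading order; moreover $\{\chi,\varepsilon z\bar z\}$ does not raise the $(z,\bar z)$-order at all, so corrections cascade within a fixed graded piece. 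The paper resolves this by giving $\varepsilon$ weight $1$ (writing $h=\sum_{s}\sum_{j}\varepsilon^j h_{s-j,j}$ with $h_{s-j,j}\in\mathcal H^n_{s-j}$, so that $\varepsilon z\bar z$ and $h_{2,0}$ sit at the same level $s=2$) and by normalising lexicographically in $(s,j)$ with substitutions $\Phi^{\varepsilon^k}_{\chi_{p-k,k}}$; with that ordering the homological equation at each step involves only $\bigl[L_{\chi_{p-k,k}}h_{2,0}\bigr]$, and the operators and complements of Sections~\ref{Se:n5}--\ref{Se:n3} apply verbatim. Your alternative of working over the ring $\mathbb{R}[[\varepsilon]]$ could be made rigorous, but only by introducing exactly this secondary induction in the $\varepsilon$-order, which you have not done.

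Second, the uniqueness argument. ``Expanding in $\varepsilon$ and applying Theorem~\ref{Thm:main} to each $\varepsilon$-jet'' is not a valid step: Theorem~\ref{Thm:main} is a statement about a single area-preserving map with an exactly resonant elliptic fixed point, and the higher $\varepsilon$-jets of $H_\varepsilon$ are not such objects (for $\varepsilon\ne0$ the multiplier generically detunes off the resonance, which is the whole point of the quasi-resonant setting). Uniqueness has to be proved by the same induction as existence: if $\exp(L_\chi)$ conjugates two normal forms of the shape (\ref{Heps}), the lowest nonzero term $\varepsilon^k\chi_{p-k,k}$ of $\chi$ satisfies $L_{p-k}\chi_{p-k,k}\in(\text{complement})\cap\mathrm{Image}(L_{p-k})=\{0\}$, so $\chi_{p-k,k}$ lies in the kernel of $L_{p-k}$, which is spanned by powers of $h_{2,0}$ and can be absorbed exactly as in the proofs of Propositions~\ref{Pro:n5} and~\ref{Pro:n4}. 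That kernel analysis, not a jetwise appeal to the individual-map theorem, is what forces the coefficients to coincide.
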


\subsection*{Structure of the paper}
The rest of the paper is structured in the following way.
In Section~\ref{Se:prep} we provide some useful definitions,
describe the usage of $(z,\bar z)$ variables and
derive several useful formulae.
In Section~\ref{Se:fi} we prove that a
tangent to identity area-preserving map can
be formally interpolated by an autonomous Hamiltonian.
This result is used in the proof of the main theorem
and is included for completeness of the arguments.
Finally, in Sections~\ref{Se:n5}, \ref{Se:n4} and~\ref{Se:n3}
we derive the unique normal forms for the cases $n\ge5$,
$n=4$ and $n=3$ respectively.

Finally, in Section~\ref{Se:quasires} we construct unique normal forms
for the families of area-preserving maps.

\section{Lie series in the complex form\label{Se:prep}}

\subsection{Complex variables}

It is well known that the normal form
theory can look much simpler in the complex coordinates
$z$ and $\bar z$ defined by%\footnote{Who introduced this trick?}
\begin{equation}\label{Eq:zbarz}
z=x+iy\qquad\mbox{and}\qquad
\bar z=x-iy.
\end{equation}
The change $(x,y)\mapsto(z,\bar z)$ is a linear automorphism
of $\mathbb{C}^2$ with the inverse transformation given by
\begin{equation}
x=\frac{z+\bar{z}}{2}
\qquad\text{and}\qquad
y=\frac{z-\bar{z}}{2i}\,.
\label{Eq:complexvar}
\end{equation}
If $x$ and $y$ are both real
then $\bar z$ is the complex conjugate of $z$, i.e., $\bar z=z^*$.

It is important to note that the change is not symplectic
since
\begin{equation}
dx\wedge dy=-\frac{1}{2i}dz\wedge d\bar z\,.
\label{Symplectic_form}
\end{equation}
Nevertheless since the Jacobian is constant,
an area-preserving map also preserves the form $dz\wedge d\bar z$.

Let $(f,\bar f)$ be the components
of a map $F$ in the coordinates $(z,\bar z)$.
We say that $F$ has {\em real symmetry} if
\begin{equation}\label{Eq:realsymm}
\bar f(z,\bar z)=\bigl(f(\bar z^*,z^*)\bigr)^*\,.
\end{equation}
Then the second component of $F$ can be restored
using this symmetry and we do not have to consider
it separately.

We note that $F$ commutes with the involution
$ (z,\bar z)\mapsto(\bar z^*,z^*) $. In the original coordinates
this involution takes the form $(x,y)\mapsto (x^*,y^*)$
and, consequently, $F$ takes real values when both
$x$ and $y$ are real.

For a real-analytic map $F$
the real symmetry described by (\ref{Eq:realsymm})
can be easily restated in terms of its Taylor coefficients.
Then the real symmetry naturally extends from real-analytic
functions onto formal power series and suggests the following definition.

\begin{definition}
We say that the vector-valued series $(f(z,\bar z),\bar f(z,\bar z))$
where $f$ and $\bar f$ are formal series of the form
\[
f(z,\bar z)=\sum_{k,l\ge0}f_{kl}z^k\bar z^l\qquad\text{and}\qquad
\bar f(z,\bar z)=\sum_{k,l\ge0}\bar f_{kl}z^k\bar z^l\,,
\]
has\/ {\em the real symmetry\/} if
\begin{equation}\label{Eq:realsymm_kl}
\bar f_{kl}=f_{lk}^*
\end{equation}
for all $k,l\ge0$.
\end{definition}

This trick substantially simplifies manipulations with power series.
As an example consider the
rotation $(x,y)\mapsto R_{\alpha }(x,y)$ defined by equation~(\ref{Eq:rotation}).
In the complex notation this map takes the diagonal form
$(z,\bar z)\mapsto (\mu z,\mu^*\bar z)$. Indeed, for the first component we get
\begin{eqnarray*}
z=x+iy\ \mapsto\ z_1&=&\left( x\cos \alpha -y\sin \alpha \right) +i\left( x\sin \alpha
+y\cos \alpha \right)  \\
&=&\left( x+iy\right) \left( \cos \alpha +i\sin \alpha \right)
=e^{i\alpha }z=\mu z\,.
\end{eqnarray*}
The formula for the $\bar z$ component of the map is obtained by the real symmetry.

\medskip

We will also consider scalar functions and scalar formal series rewritten
in terms of the variables $(z,\bar z)$.
If $h$ is real-analytic in a neighbourhood of the
origin, it can be expanded in Taylor series
$$
h(z,\bar z)=\sum_{k,l\ge0}h_{kl}z^k\bar z^l\,.
$$
Since $h(z,z^*)$ is real for all $z\in\mathbb{C}$
such that the series converges,
the coefficients are symmetric:
\[
h_{kl}=h^*_{lk}
\]
for all $k,l$. This prompts the
following definition.

\begin{definition}
We say that a formal power series
\[
h=\sum_{k,l\ge0}h_{kl}z^k\bar z^l
\]
is real-valued if $h_{kl}=h^*_{lk}$ for all $k,l$.
\end{definition}

\subsection{Divergence-free vector fields in the complex form}

From now on we assume that the $\bar f$ component
is obtained from $f$ using the real symmetry.
It is convenient to introduce the divergence operator by
\begin{equation}
\func{div}f=
\frac{\partial f}{\partial z}+
\frac{\partial \bar{f}}{\partial \bar{z}}\,.
\end{equation}
In the future proofs we will need the following simple fact.

\begin{lemma}\label{Lemma:areapres}
A real map $(z,\bar z)\mapsto(f(z,\bar z),\bar f(z,\bar z))$
is area-preserving if and only if the function $g(z,\bar z):= f(z,\bar z)-z$
satisfies
\begin{equation}
\label{Eq:areapres}
\func{div} g =\left\{ \bar{g},g\right\}
\end{equation}
where $\bar g$ is obtained from $g$ using the real symmetry.
\end{lemma}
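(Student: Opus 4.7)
The plan is to translate the area-preservation condition into a Jacobian determinant identity in the complex coordinates $(z,\bar z)$ and then algebraically rearrange it into the claimed form.

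First I would invoke the observation immediately following \eqref{Symplectic_form}: because the linear change $(x,y)\leftrightarrow(z,\bar z)$ has constant nonzero Jacobian, area-preservation of $F$ is equivalent to $F^*(dz\wedge d\bar z)=dz\wedge d\bar z$. Pulling back this form by $F=(f,\bar f)$ gives the elementary scalar identity
\[
\frac{\partial f}{\partial z}\frac{\partial \bar f}{\partial \bar z}
-\frac{\partial f}{\partial \bar z}\frac{\partial \bar f}{\partial z}=1.
\]
Next I would substitute $f=z+g$ and $\bar f=\bar z+\bar g$, so that the four partial derivatives become $f_z=1+g_z$, $f_{\bar z}=g_{\bar z}$, $\bar f_z=\bar g_z$, $\bar f_{\bar z}=1+\bar g_{\bar z}$. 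Multiplying out and cancelling the two constant terms leaves
\[
g_z+\bar g_{\bar z}\;=\;g_{\bar z}\bar g_z-g_z\bar g_{\bar z},
\]
whose left-hand side is precisely $\func{div} g$, as defined just above the lemma.

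The final step is to recognise the right-hand side as the Poisson bracket $\{\bar g,g\}$. The natural bracket associated with the form $dz\wedge d\bar z$ is $\{u,v\}=u_z v_{\bar z}-u_{\bar z}v_z$, which I would briefly justify from \eqref{Symplectic_form} by transforming the real bracket $u_x v_y-u_y v_x$ via $\partial_x=\partial_z+\partial_{\bar z}$, $\partial_y=i(\partial_z-\partial_{\bar z})$ and rescaling by the factor $-\tfrac{1}{2i}$ coming from \eqref{Symplectic_form}. With this convention
\[
\{\bar g,g\}=\bar g_z g_{\bar z}-\bar g_{\bar z}g_z=g_{\bar z}\bar g_z-g_z\bar g_{\bar z},
\]
which matches the right-hand side exactly. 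Since every step above is an algebraic equivalence, both directions of the ``if and only if'' follow at once.

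The main obstacle is purely bookkeeping: keeping straight the constant factor $-\tfrac{1}{2i}$ from \eqref{Symplectic_form} so that the correct sign and normalisation of the Poisson bracket in $(z,\bar z)$ is used, and being careful that $\bar g$ is treated as the series obtained from $g$ via the real symmetry rather than as a formally independent function. There is no genuine analytic content — the lemma is essentially the observation that the $2\times2$ determinant condition for symplecticity in complex coordinates collapses, after substitution $f=z+g$, into a single scalar identity involving only $g$ and its conjugate.
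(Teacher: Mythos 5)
Your proposal is correct and follows essentially the same route as the paper: both reduce area preservation to the identity $f_z\bar f_{\bar z}-f_{\bar z}\bar f_z=1$ (the paper phrases this as $df\wedge d\bar f=dz\wedge d\bar z$, justified by the constant Jacobian of the linear change $(x,y)\mapsto(z,\bar z)$), substitute $f=z+g$, cancel the constant term, and read off $\func{div}g$ on the left and the bracket $\{\bar g,g\}=\bar g_z g_{\bar z}-\bar g_{\bar z}g_z$ on the right. The sign and normalisation of the bracket in your final step agree with the convention implicit in the paper's definition (\ref{Def_L}), so nothing further is needed.
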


\begin{proof}
Taking into account $f(z,\bar z)=z+g(z,\bar z)$ and using the real symmetry to get $\bar f$
 we obtain
\begin{eqnarray*}
df\wedge d\bar{f}
&=&\left( dz+\frac{\partial g}{\partial z}dz+\frac{\partial g}{\partial
\bar{z}}d\bar{z}\right) \wedge \left(d\bar{z}+\frac{\partial \bar{g%
}}{\partial \bar{z}}d\bar{z}+\frac{\partial \bar{g}}{\partial z}dz\right)  \\
&=&\left( 1+ \frac{\partial \bar{g}}{\partial \bar{z}}+\frac{%
\partial g}{\partial z}+\frac{\partial g}{\partial z}\frac{\partial \bar{g}}{%
\partial \bar{z}}-\frac{\partial g}{\partial \bar{z}}\frac{\partial \bar{g}}{%
\partial z}\right) dz\wedge d\bar{z}.
\end{eqnarray*}
Since $f$ is area-preserving $d f\wedge d\bar{f}=dz\wedge d\bar{z}$ and we get the identity
\[
\frac{\partial g}{\partial z}+
\frac{\partial \bar{g}}{\partial \bar{z}}
=
\frac{\partial g}{\partial \bar{z}}\frac{\partial \bar{g}}{\partial z}-
\frac{\partial g}{\partial z}\frac{\partial \bar{g}}{\partial \bar{z}}
\]
which is equivalent to (\ref{Eq:areapres}).
\end{proof}

Let us consider the Hamiltonian equations
\begin{eqnarray*}
\dot{z} &=&-2i\frac{\partial h}{\partial \bar{z}}, \\
\dot{\bar{z}} &=&2i\frac{\partial h }{\partial z}.
\end{eqnarray*}
Obviously this vector field has zero divergence. Let us consider
a vector field
\begin{equation*}
\dot{z}=g(z,\bar{z})\qquad
\dot{\bar z}=\bar g(z,\bar{z})\,,
\end{equation*}
where $\bar g$ is obtained from $g$ using the real symmetry.
A natural question arises: Suppose $g$ is divergence free,
is it Hamiltonian with a real-valued Hamiltonian function?

The next Lemma gives a positive answer for polynomial
(and consequently for all formal) vector fields.

\begin{lemma}
\label{Lemma_real}
Let $g_{p}$ be a homogeneous polynomial of order $p\ge0$.
There is a real-valued homogeneous polynomial
$h_{p+1}$ of order $p+1$ such that
\begin{equation}
g_{p}=-2i\frac{\partial h_{p+1}}{\partial \bar{z}},  \label{Eq:hamvf}
\end{equation}
if and only if
\begin{equation}\label{Eq:real0div}
\func{div} g_{p} =0\,.
\end{equation}
If exists, the polynomial $h_{p+1}$ is unique
in the class of real-valued homogeneous polynomials
of $z$ and $\bar z$.
Moreover, if for some $\mu$ with $|\mu|=1$ we have $g_{p}(\mu z,\mu^* \bar z)=\mu g_{p}(z,\bar z)$
then $h_{p+1}(\mu z,\mu^*\bar z)=h_{p+1}(z,\bar z)$.
\end{lemma}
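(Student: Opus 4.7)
The plan is to prove both implications by a direct coefficient-matching argument. First I would dispatch the easy direction, then construct $h_{p+1}$ explicitly from $g_p$ and show that the reality constraint on its coefficients is literally the same system of linear equations as the divergence-free condition on $g_p$.

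For the necessity direction, assume $g_p=-2i\,\partial h_{p+1}/\partial\bar z$ with $h_{p+1}$ real-valued. The reality $h_{kl}=h_{lk}^*$ translates into the identity $\overline{\partial h_{p+1}/\partial\bar z}=\partial h_{p+1}/\partial z$ when viewed coefficient-wise, which is exactly the real-symmetry relation $\bar g_p=2i\,\partial h_{p+1}/\partial z$. Equality of mixed partials then yields $\func{div}g_p=\partial_z g_p+\partial_{\bar z}\bar g_p=0$.

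For sufficiency, expand $g_p=\sum_{k+l=p}c_{kl}z^k\bar z^l$ and seek $h_{p+1}=\sum_{k+l=p+1}h_{kl}z^k\bar z^l$. Matching $-2i\,\partial h_{p+1}/\partial\bar z=g_p$ coefficient by coefficient forces
\[
h_{k,l+1}=\frac{i\,c_{kl}}{2(l+1)}\qquad\text{for all }k+l=p,
\]
which pins down every $h_{kl}$ with $l\ge 1$. The single remaining coefficient $h_{p+1,0}$ has to be defined by the reality relation $h_{p+1,0}=h_{0,p+1}^*$, since $\partial h_{p+1}/\partial\bar z$ is blind to it. The main step is to verify that the remaining reality constraints $h_{kl}=h_{lk}^*$ with $k,l\ge 1$ are automatically satisfied. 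Substituting the formula above and clearing denominators, these constraints become
\[
(j+1)\,c_{j+1,m}+(m+1)\,c_{m+1,j}^{*}=0\qquad(j+m=p-1),
\]
which is precisely the expansion of $\func{div}g_p=0$ in coefficients (using that $\bar g_p$ is obtained from $g_p$ by real symmetry). So reality of $h_{p+1}$ is equivalent to $\func{div}g_p=0$, which closes the equivalence.

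Uniqueness follows because the difference $h'$ of two candidates is a real-valued homogeneous polynomial of degree $p+1$ with $\partial h'/\partial\bar z=0$, hence a polynomial in $z$ alone; reality then forces $h'_{p+1,0}=(h'_{0,p+1})^{*}=0$, and all other coefficients vanish for degree reasons. For the equivariance claim, the hypothesis forces $\mu^{k-l-1}=1$ whenever $c_{kl}\ne 0$; by the explicit formula for $h_{k,l+1}$, this is exactly the condition for the corresponding monomial $z^k\bar z^{l+1}$ of $h_{p+1}$ to be invariant under $(z,\bar z)\mapsto(\mu z,\mu^*\bar z)$, and the top coefficient $h_{p+1,0}$ inherits the same condition through reality. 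The only subtle bookkeeping is at the boundary indices $k=0$ or $l=0$, where the reality relation rather than the differential identity is what determines the coefficient; beyond that, the proof is routine linear algebra on monomials.
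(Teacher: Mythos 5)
Your proposal is correct and follows essentially the same route as the paper: coefficient matching gives $h_{k,l+1}=i\,c_{kl}/(2(l+1))$, the reality constraints on these coefficients are identified with the coefficient form of $\func{div}g_p=0$, the free coefficient $h_{p+1,0}$ is fixed by reality, and uniqueness plus equivariance follow from the explicit formulae. The only difference is that you spell out the uniqueness and equivariance steps slightly more explicitly than the paper does, which is harmless.
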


\begin{proof}
Suppose
\[
g_p(z,\bar z)=\sum_{k+l=p}a_{kl}z^k\bar z^l
\]
is divergence free. Then
\[
0=\func{div}g_p=\frac{\partial g_p}{\partial z}+
\frac{\partial \bar{g}_p}{\partial \bar{z}}=
\sum_{k+l=p} ka_{kl}z^{k-1}\bar z^l+l a^*_{lk}z^{k}\bar z^{l-1}
\]
where we used the real symmetry to get $\bar g_p$.
Collecting the coefficients in front of $z^k\bar z^l$
we see that $\func{div}g_p=0$ if and only if
\begin{equation}\label{Eq:div0coeff}
(k+1)a_{k+1,l}+(l+1)a^*_{l+1,k}=0
\end{equation}
for all $k,l\ge0$. These relations
involve all coefficients excepting $a_{0p}$.
A homogeneous polynomial of order $p+1$ has the form
\begin{equation}
h_{p+1}=\sum_{k+l=p+1}h_{kl}z^{k}\bar{z}^{l}  \,.
\label{Lesson2_2}
\end{equation}
Substituting this sum into (\ref{Eq:hamvf}),
we easily see that $h_{p+1}$ satisfies the equation
if and only if
\[
h_{kl}=-\frac{a_{k,l-1}}{2 il}
\]
for all $l\ge 1$. These equalities define
all coefficients of $h_{p+1}$ excepting $h_{p+1,0}$.
Equation (\ref{Eq:div0coeff}) implies the real-valuedness
conditions
\begin{equation}
h_{kl}=h^*_{lk}  \label{conjugate_coeff}
\end{equation}
for $l\ge 1$.
The coefficient $h_{p+1,0}$ is not defined by the equation
so we set $h_{p+1,0}=h_{0,p+1}^*$ to extend Equation
(\ref{conjugate_coeff}) onto $l=0$. Then $h_{p+1}$ is real valued.

The other direction of the lemma is trivial since
a Hamiltonian vector field is divergence free.

Finally, if $g$ commutes with the rotation $z\mapsto \mu z$
the Hamiltonian $h_{p+1}$ is invariant with respect to this rotation
due to the explicit formula for its coefficients provided above.
\end{proof}

%\medskip
%
%\begin{example} Let $\chi (x,y)=x^{2}+y^{2}$ be
%a real homogeneous polynomial of order 2. After
%complexifying $\chi $ we obtain $\chi (z,\bar{z})=z\bar{z}$.
%Differentiation with respect to $\bar{z}$ gives
%\begin{equation*}
%\frac{\partial \chi }{\partial z}=\bar{z}\,.
%\end{equation*}
%Hence $\frac{\partial \chi }{\partial z}$ is not real-valued.
%However the mixed second derivative
%$\frac{\partial ^{2}\chi }{\partial z\partial \bar{z}}$
%is again real-valued.
%\end{example}

\subsection{Formal Lie series}

Let $\chi$ and $g$ be two formal power series.
We note that any of the series
involved in next definitions may diverge.
The linear operator defined by the formula
\begin{equation}\label{Def_L}
L_{\chi }g=-2i\left\{g, \chi\right\}_{z,\bar z}
\end{equation}
is called {\em the Lie derivative\/} generated by $\chi$.
We note that if $\chi$ starts with order $p$ and $g$
starts with order $q$, then the series $L_{\chi }g$
starts with order $p+q-2$ as the Poisson bracket
involves differentiation.
We assume $p\ge3$. Then the lowest
order in $L_{\chi }g$ is at least $q+1$ and we
can define the exponent of $L_\chi$ by
\begin{equation}\label{Eq:formalexp}
\exp(L_\chi)g=g+\sum_{k\ge1}\frac1{k!}L_\chi^kg\,,
\end{equation}
where $L_\chi^k$ stands for the operator $L_\chi$
applied $k$ times. The lowest order in the series $L_\chi^kg$
is at least $q+k$ and consequently every coefficient
of $\exp(L_\chi)g$ depends only on a finite number of
coefficients of $\chi$ and $g$. More precisely,
a coefficient of order $n$ depends polynomially
on the coefficients of orders up to $n$.

Let $\mathrm{id}:\mathbb C^2\to\mathbb C^2$ be the identity map
and consider the formal series
\[
\Phi_\chi^1=\exp(L_\chi) \mathrm{id}
\]
where the exponential is applied componentwise.
We note that it is easy to construct the formal series
for the inverse map:
\[
\Phi_\chi^{-1}=\exp(-L_\chi) \mathrm{id}\,.
\]
It follows from the following more general relation:
for any formal series $g$
\begin{equation}\label{Eq:gL}
g\circ \Phi_\chi^1=\exp(L_\chi)g\,.
\end{equation}
Indeed, this equality is known to be valid for convergent series.
In particular, for a polynomial $\chi$ the series
$\Phi_\chi^1$ converges in a neighbourhood of the origin
as it coincides with the Taylor expansion of the time-one
map which shifts a point along trajectories of the Hamiltonian equations:
\begin{eqnarray*}
\dot{z} &=&-2i\frac{\partial \chi }{\partial \bar{z}}, \\
\dot{\bar{z}} &=&2i\frac{\partial \chi }{\partial z}.
\end{eqnarray*}
The factor $2i$ appears due to the symplectic form (\ref{Symplectic_form}).

We can substitute a solution of the Hamiltonian equation
into a function $g(z,\bar z)$.
The Hamiltonian equations and the chain rule imply
\[
\dot g=2i\left\{\chi,g\right\}.
\]
Then repeating the arguments inductively we get a formula for a derivative
of order $n$ in $t$:
\[
g^{(n)}=L_{\chi}^ng.
\]
Writing a Taylor series centred at $0$ for $g\circ \Phi_\chi^t$
and substituting $t=1$ we obtain (\ref{Eq:gL}).
Then the formula extends from polynomials onto formal series
since each order of the series depends only on a finite number
of coefficients.

\section{Formal interpolation\label{Se:fi}}

The next theorem states that a tangent to identity area-preserving
map can be formally interpolated by a Hamiltonian flow.
Note that the theorem says nothing about convergence of the series,
even in the case when the original map is analytic.

\begin{theorem}
\label{Theorem_Interpol}
If a formal series
\[
f(z,\bar z)=z+\sum_{\substack{k+l\ge2\\ k,l\ge0}}
c_{kl}z^k\bar z^l
\]
is a first component of an area-preserving map with
the real symmetry then there exists a unique
real-valued formal Hamiltonian
\[
h(z,\bar z)=\sum_{\substack{k+l\ge3\\ k,l\ge0}}
h_{kl}z^k\bar z^l
\]
such that
\begin{equation*}
f=\exp(L _{h})z.
\end{equation*}
\end{theorem}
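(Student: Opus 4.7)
The plan is to construct $h=\sum_{p\ge3}h_p$ one homogeneous piece at a time by induction on the degree, invoking Lemma~\ref{Lemma_real} at each step to obtain both existence and uniqueness of the new term. The crucial degree count is this: for a real-valued homogeneous $h_p$ of degree $p\ge 3$ the Lie derivative $L_{h_p}z=-2i\,\partial_{\bar z}h_p$ is homogeneous of degree $p-1$, while every iterated Lie derivative $L_{h_{p_1}}\cdots L_{h_{p_k}}z$ has degree $p_1+\cdots+p_k-2k+1$, which is strictly larger than each $p_j-1$ as soon as $k\ge 2$ (because every $p_i\ge 3$). Hence, once $h^{(n+1)}:=h_3+\cdots+h_{n+1}$ has been fixed, adjoining an additional homogeneous term $h_{n+2}$ alters the first component of $\exp(L_h)z$ in degree $n+1$ by exactly $-2i\,\partial_{\bar z}h_{n+2}$, leaving all lower degrees unchanged.

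Let $\Phi^{(n+1)}:=\exp(L_{h^{(n+1)}})\mathrm{id}$ be the current preliminary change of variables and assume inductively that its first component agrees with $f$ up through degree $n$. Writing $\gamma:=f-\exp(L_{h^{(n+1)}})z=O(n+1)$ and denoting its leading homogeneous part by $\gamma_{n+1}$, the inductive step reduces to solving
\[
-2i\,\partial_{\bar z}h_{n+2}=\gamma_{n+1}.
\]
By Lemma~\ref{Lemma_real} this admits a unique real-valued homogeneous solution $h_{n+2}$ precisely when $\func{div}\gamma_{n+1}=0$; establishing that divergence identity is the only non-routine step.

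To obtain it I would exploit area-preservation through the composition $\Psi:=(\Phi^{(n+1)})^{-1}\circ F$. Since $\Phi^{(n+1)}$ is the time-one flow of a polynomial Hamiltonian, it is a bona fide area-preserving real-symmetric map, so $\Psi$ is too, and its first component has the form $z+\delta$ with $\delta=O(n+1)$. Lemma~\ref{Lemma:areapres} applied to $\Psi$ yields $\func{div}\delta=\{\bar\delta,\delta\}=O(2n)$, so extracting the homogeneous degree-$n$ component gives $\func{div}\delta_{n+1}=0$. A short Taylor expansion of the identity $F=\Phi^{(n+1)}\circ\Psi$ at $(z,\bar z)$, using $\Phi^{(n+1)}-\mathrm{id}=O(2)$, shows that all correction terms have degree at least $n+2$, so $\gamma_{n+1}=\delta_{n+1}$ and hence $\func{div}\gamma_{n+1}=0$. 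Uniqueness of the whole formal Hamiltonian then follows from the uniqueness clause of Lemma~\ref{Lemma_real} applied at every degree (the one free coefficient of each $h_{n+2}$ being pinned down by the real-valuedness requirement), and the base case $h^{(2)}=0$, $\Phi^{(2)}=\mathrm{id}$, $\delta=f-z$ fits into exactly the same scheme. The main obstacle is the divergence identity $\func{div}\gamma_{n+1}=0$; once it is in place the rest is a clean algebraic recursion.
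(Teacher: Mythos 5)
Your proposal is correct and follows essentially the same route as the paper: induction on the degree, reduction of each step to the homological equation $-2i\,\partial_{\bar z}h_{m+1}=(\text{known homogeneous part})$, verification that the right-hand side is divergence-free via area preservation, and then existence and uniqueness of the real-valued solution from Lemma~\ref{Lemma_real}. The only (cosmetic) difference is how the divergence identity is obtained --- you compose $F$ with $(\Phi^{(n+1)})^{-1}$ and apply Lemma~\ref{Lemma:areapres} to the resulting near-identity map, while the paper compares $\func{div} f_m$ and $\func{div}[\phi^1_{H_{m+1}}]_m$ directly through the order-by-order expansion of the same area-preservation identity; both arguments are sound and equivalent in substance.
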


\begin{proof}
First we introduce the notation. Let $h_k$, $k\ge3$, be a homogeneous
polynomial of order $k$ and for $m\ge3$
\[
H_m=\sum_{k=3}^m h_{k}(z,\bar{z})\,.
\]
Let $[\,\cdot\,]_k$ denote terms of order $k$ in a formal series.
For example, $[H_m]_k=h_k$ for $3\le k\le m$
and $[H_m]_k=0$ for $k> m$.
Consider the series
\[
\phi^1_{H_m}(z)=\exp(L_{H_m})z\,,
\]
where $z$ stands for the function $(z,\bar z)\mapsto z$.
We will need a more explicit
formula for the exponential map. It is convenient to introduce
\[
L_sg:=2i\left\{h_{s+2},g \right\}
\]
for the Lie derivative generated by the homogeneous polynomial
$h_{s+2}$. If $g$ is a homogeneous polynomial of order $q$, then
$L_s(g)$ is a homogeneous polynomial of order $q+s$.
Therefore $L_s$ increases the order of any homogeneous polynomial by $s$.
Then using the bi-linearity of the Poisson bracket we get
\begin{equation*}
L_{H_m} g =2i\left\{ H_m,g \right\} =2i\sum_{k=3}^m\left\{
h_{k},g \right\} =\sum_{s=1}^{m-2}L_{s}g\,.
\end{equation*}
Substituting this sum into the series for the exponential map we obtain
\begin{eqnarray*}
\phi _{H_m}^{1}(z)&=&z+L_{H_m}z
+\sum_{l\geq 2}\frac{1}{l!}L_{H_m}^lz
\\&=&
z+\sum_{s=1}^{m-2}L_{s}z
+\sum_{l\geq 2}\frac{1}{l!}
\sum_{
1\le s_1,\ldots,s_l\le m-2 }L_{s_1}\cdots L_{s_l}z.
\end{eqnarray*}
Now for every $k\geq 1$ we collect the terms of order $k+1$
\begin{equation}\label{Eq:LSk}
\left[ \phi _{H_m}^{1}(z)\right] _{k+1}
=L_{k}z+\sum_{l=2}^{k}\frac{1}{l!}%
\sum_{\substack{s_{1}+\ldots +s_{l}=k\\
1\le s_1,\ldots,s_l\le m-2}}L_{s_{1}}\ldots L_{s_{l}}z.
\end{equation}
We need one more auxiliary formula. Let us write
$f_p=[f]_p$. We get
\[
f(z,\bar z)=z+\sum_{p\ge2}f_p(z,\bar z).
\]
Substituting this series into equation (\ref{Eq:areapres}) of
Lemma~\ref{Lemma:areapres} and collecting terms of order $p-1$ we get
\begin{equation}\label{eqnCor_RealPart}
\func{div} f_p
=\sum_{k=2}^{p-1}\left\{
\bar f_{k},f_{p-k+1}\right\} .
\end{equation}

\medskip

Our aim is to construct an infinite sequence of $h_k$ such that for every $m\ge3$
\begin{equation}\label{Eq:indFI}
\left[\phi^1_{H_m}\right]_{k}=[f]_k
\qquad
\mbox{for $k=2,\ldots,m-1$.}
\end{equation}
We use the induction. First consider $m=3$. Then equation (\ref{Eq:indFI})
reads
\begin{equation*}
L_{1}z=f_{2}
\end{equation*}
which is equivalent to
\begin{equation}\label{Eq:h3}
 -2i\frac{\partial
h_{3}}{\partial \bar{z}}=f_{2}.
\end{equation}
We note that the sum in the right-hand side of equation
(\ref{eqnCor_RealPart}) with $p=2$  has no terms and consequently $\func{div} f_2=0$.
Then Lemma~\ref{Lemma_real}
implies that there exists a unique real-valued $h_{3}$
which satisfies equation~(\ref{Eq:h3}). This choice of $h_3$
guaranties that (\ref{Eq:indFI}) is satisfied for $m=3$.

\medskip

We start the induction step. Suppose for some $m\ge3$
we have found $h_{3},\ldots ,h_{m}$ such that (\ref{Eq:indFI})
holds. Now we look for
$h_{m+1}$ such that (\ref{Eq:indFI}) holds with $m$
replaced by $m+1$.
Equation (\ref{Eq:LSk}) and the induction assumption
imply that
\[
[\phi^1_{H_{m+1}}]_{k}=[\phi^1_{H_{m}}]_{k}=f_k
\qquad\mbox{for $k\le m-1$.}
\]
Then equation (\ref{Eq:LSk}) implies that the equality $[\phi^1_{H_{m+1}}]_{m}=f_{m}$
is equivalent to
\begin{equation}\label{Eq:hm1}
-2i\frac{\partial h_{m+1}}{\partial \bar z}
+\sum_{l=2}^{m-1}\frac{1}{l!}%
\sum_{\substack{s_{1}+\ldots +s_{l}=m-1\\
1\le s_1,\ldots,s_l\le m-2}}L_{s_{1}}\ldots L_{s_{l}}z
=f_{m}.
\end{equation}
We note that this formula includes $L_s=L_{h_{s+2}}$ with $s\le m-2$
which depend on $h_k$ with $k\le m$.
Therefore we consider (\ref{Eq:hm1}) as an equation for $h_{m+1}$.
In order to show that the equation has a real-valued solution
 we need to check the assumptions of Lemma~\ref{Lemma_real}
are satisfied. Since $\func{div}\left(2i\frac{\partial h_{m+1}}{\partial \bar z}\right)=0$
it is sufficient to check that $\func{div} [\phi^1_{H_{m+1}}]_{m}=\func{div} f_{m}$.
The last property follows from the area preservation. Indeed consider two
area-preserving maps
\begin{equation*}
f(z,\bar{z})=z+\sum_{k\geq 2}{f}_{k}
\qquad\mbox{and}\qquad
\widetilde{f}(z,\bar{z})=z+\sum_{k\geq 2}\widetilde{f}_{k}
\end{equation*}
such that $f_{j}=\widetilde{f}_{j}$ for $2\leq j\leq m-1$. Then
(\ref{eqnCor_RealPart}) implies
\begin{equation*}
\mathrm{div\,}  f_{m}=
\sum_{j=2}^{m-1}\left\{\bar f_{j},f_{k-j}\right\}
=
\sum_{j=2}^{m-1}\left\{ \bar{\tilde f}_{j},\tilde f_{k-j}\right\}
=\mathrm{div\,} \tilde f_{m}.
\end{equation*}
We apply this argument with $\tilde f=\phi^1_{H_{m+1}}$
which is area-preserving by Liouville's theorem. Therefore
equation (\ref{Eq:hm1}) can be uniquely solved with respect to $h_{m+1}$.
The induction step is complete and we have uniquely defined
the desired formal Hamiltonian $h=\sum_{p\ge3}h_p$.
\end{proof}

\begin{remark}
If the map $f(z,\bar z)=\mu z+\sum_{k\ge2}f_k(z,\bar z)$ is in Birkhoff normal form then
its coefficients commute with the rotation $(z,\bar z)\mapsto(\mu z,\mu^*\bar z)$,
{\em i.e.,}  $f_k(\mu z,\mu^* \bar z)=\mu f_k(z, \bar z)$.
The map
\begin{equation*}
\mu^*
f(z,\bar{z})=
z+\sum_{k\ge2}\mu^*f_{k}(z,\bar{z} )
\end{equation*}
satisfies the assumptions of the formal interpolation theorem
which implies that there exists a unique formal Hamiltonian
vector field such that $f=\mu \phi _{h}^{1}$.
Moreover a more accurate analysis of the proof shows that
the Hamiltonian $h$ is in Birkhoff normal form itself.
In other words, the formal Hamiltonian is invariant with
respect to the rotation:
$$
h(\mu z,\mu^* \bar{z})= h(z,\bar{z}).
$$
We remind that the Hamiltonian $h$ is a formal series, and this
relation is to be interpreted termwise.
\end{remark}

\begin{remark}
We also proved that the map $F=(f,\bar f)$ can be approximated by an integrable
map. Indeed, expanding $F-\Phi^1_{H_m}$ into Taylor series
we use equation\/  {\rm (\ref{Eq:indFI})} to show that
the first $m-1$ orders of the series vanish. Then
the standard estimate for a remainder of the Taylor formula
implies that for any $m\ge3$
\[
F=\Phi^1_{H_m}+O(r^{m})
\]
where $r=|z|+|\bar z|$.
\end{remark}
\section{Weak resonances\label{Se:n5}}

Suppose that an area-preserving map $f$ is in the resonant normal form,
{\em i.e.}, $f(\mu z,\mu^*\bar z)=\mu f(z,\bar z)$.
Then there is a formal Hamiltonian such that
$$
f=\mu \phi^1_h=\mu\exp(L_h)z
$$
and $h$ has the symmetry induced by the linear part of $f$:
$$
h(\mu z,\mu^*\bar z)= h(z,\bar z)\,.
$$
The symmetry of the interpolating Hamiltonian $h$
implies that it is represented by a formal series
which contains resonant terms only:
$$
h(z,\bar{z})=\sum_{\substack{ k+l\geq 3 \\ k=l \pmod n}}h_{kl}z^{k}\bar{z}^{l}.
$$
It is easy to see that these series involve a fourth order term
$h_{22}z^2\bar z^2$ independently of $n$. If $n\ge 5$ there are no
other resonant term of an order 4 or less. Then the leading
order is of the same form as in the non-resonant case.
For this reason the resonances with $n\ge5$ are called {\em weak\/}.

Let us consider the case $n\geq 5$ with more details:
\begin{equation}\label{Eq:h_n=5}
h(z,\bar{z})=h_{22}z^{2}\bar{z}^{2}+
\sum_{\substack{ k+l\geq 5 \\ k=l\pmod n}}h_{kl}z^{k}\bar{z}^{l}.
\end{equation}
We will simplify these series using canonical substitutions.

It is convenient to group together terms of the same $\delta$-order.
For a monomial we define its $\delta$-order by
\begin{equation}\label{Eq:deltamon}
\delta(z^k\bar z^l)=
2\left|\frac{k-l}n\right|+\min\{\,k,l\,\}
=
\frac12(k+l)-\frac{n-4}{2n}|k-l|\,.
%\left\{
%\begin{array}{ll}
%\displaystyle
%\frac{2k}{n}+\left(1-\frac{2k}{n}\right)l,&k\ge l\\
%\displaystyle
%\frac{2l}{n}+\left(1-\frac{2k}{n}\right)k,&k\le l
%\end{array}
%\right.
\end{equation}
Let $\mathcal H^n_m$ denote the set of all real-valued polynomials
which can be represented as a sum of resonant monomials
of the $\delta$-order $m$. For example, $\mathcal H^n_2$ consists of
polynomials of the form $c_{n0} z^n+c_{22} z^2\bar z^2+c_{0n}\bar z^n$
with $c_{0n}=c_{n0}^*\in\mathbb C$ and $c_{22}\in\mathbb R$.
Therefore $\mathcal H^n_2$ is a three dimensional real vector space.
The resonant terms are sketched on Figure~\ref{Fig:resn}.
\begin{figure}
\begin{center}
\includegraphics[width=0.60\textwidth]{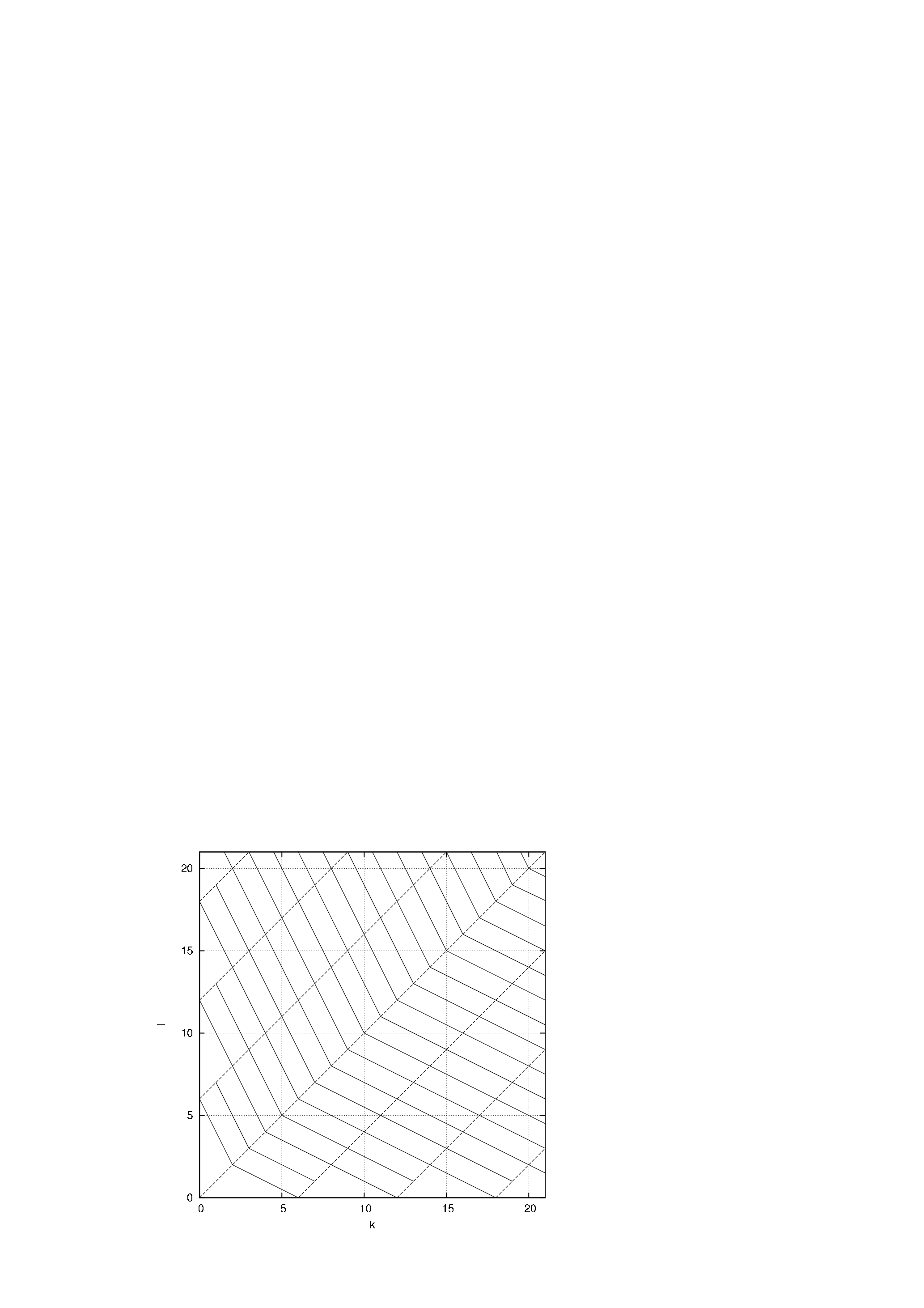}
\end{center}
\caption{Resonant terms for $n=6$.
On the diagram the resonant terms correspond to
intersections of solid and dashed lines on the $(k,l)$ plane. Each of the solid lines
connects the points of equal $\delta$-order.\label{Fig:resn}}
\end{figure}

Let $\ipart{\cdot}$ denote the integer part of a number. Assume $m\ge1$.

\begin{lemma}\label{Le:dimhp}
The set $\mathcal H^n_m$ is a
real vector space of dimension $1+2\ipart{\frac m2}$\,.
\end{lemma}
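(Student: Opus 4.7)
The plan is to enumerate all resonant monomials $z^{k}\bar z^{l}$ of $\delta$-order exactly $m$ and then apply the real-valuedness constraint $h_{kl}=h_{lk}^{*}$ to compute the real dimension of their span.

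First I would parametrize the resonant monomials. The resonance condition $k\equiv l\pmod n$ is equivalent to $k-l=jn$ for some $j\in\mathbb Z$, and in terms of this integer the formula \eqref{Eq:deltamon} reads $\delta(z^{k}\bar z^{l})=2|j|+\min\{k,l\}$. Fixing $\delta=m$ and solving: for $j=0$ we obtain the unique diagonal monomial $z^{m}\bar z^{m}$; for each $|j|\ge 1$ with $2|j|\le m$, the condition forces $\min\{k,l\}=m-2|j|$, giving the two monomials $z^{m-2|j|+|j|n}\bar z^{m-2|j|}$ and $z^{m-2|j|}\bar z^{m-2|j|+|j|n}$, one for each sign of $j$. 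Since $|j|$ ranges over $\{1,\dots,\ipart{m/2}\}$, the total number of resonant monomials of $\delta$-order $m$ equals $1+2\ipart{m/2}$.

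Next I would impose real-valuedness. The diagonal monomial $z^{m}\bar z^{m}$ satisfies $k=l$ (which happens only for $j=0$, because $n\ge 3$), so its coefficient must be real and contributes one real dimension. The remaining $2\ipart{m/2}$ monomials pair up naturally under $(k,l)\leftrightarrow(l,k)$ into $\ipart{m/2}$ conjugate pairs with $k\ne l$; for each pair the reality condition $h_{kl}=h_{lk}^{*}$ ties the two coefficients to a single complex scalar, contributing two real dimensions per pair. Summing gives $1+2\ipart{m/2}$, as claimed.

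The argument is almost entirely bookkeeping, so I do not expect a genuine obstacle. The only mild points to verify are that distinct pairs $(|j|,\mathrm{sign}(j))$ produce distinct monomials (clear from the explicit exponents), that $k=l$ occurs only when $j=0$ (immediate since $n\ge 3$ and $|k-l|=|j|n$), and that the monomials are $\mathbb C$-linearly independent, so the enumeration and the pairwise reality conditions translate directly into the stated real dimension.
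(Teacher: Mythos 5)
Your proposal is correct and follows essentially the same route as the paper: both enumerate the resonant monomials of $\delta$-order $m$ via the integer $j$ with $k-l=nj$, obtain the count $1+2\ipart{\frac m2}$, and then invoke the reality condition $h_{kl}=h_{lk}^{*}$ to identify the real dimension. Your write-up is slightly more explicit about how the diagonal monomial contributes one real dimension and each conjugate pair contributes two, but the argument is the same.
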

\begin{proof}
If a resonant monomial $z^k\bar z^l$ has the $\delta$-order $m$ then
\begin{eqnarray*}
k&=&l+nj\qquad\mbox{for some $j\in\mathbb Z$ (resonant term),}\\
m&=&2|j|+\min\{\,k,l\,\}\qquad \mbox{($\delta$-order equals $m$).}
\end{eqnarray*}
Let us count the number of monomials which satisfy these two conditions.
There is one with $j=0$: $k=l=m$.
Then there are $\ipart{\frac m2}$ monomials
with~$j>0$. Indeed, since $k>l$ we get
\begin{eqnarray*}
l&=&m-2 j\qquad 1\le j\le \ipart{\frac m2}\,,\\
k&=&l+n j\,.
\end{eqnarray*}
We also get the equal number of monomials with $j<0$ due to the symmetry.
It is convenient to denote the resonant monomials by
\begin{equation}\label{Eq:Pmj}
Q_{m,j}=z^{m+nj-2j}\bar z^{m-2j}\qquad\mbox{and}\qquad
Q_{m,-j}=z^{m-2j}\bar z^{m+nj-2j}
\end{equation}
for $0\le j\le \ipart{\frac m2}$.
Then any resonant polynomial which contains only monomials of the $\delta$-order $m$
has the form $\sum_{j=-\ipart{\frac m2}}^{\ipart{\frac m2}}c_{j}Q_{mj}$.
Taking into account that  $c_{kl}=c_{lk}^*$ due to real-valuedness,
we conclude that the real dimension of the space $\dim\mathcal H^n_m=1+2\ipart{\frac m2}$.
\end{proof}

\begin{lemma}\label{Le:ordershift}
Let $n\ge 4$. If $z^{k_1}\bar z^{l_1}$ and $z^{k_2}\bar z^{l_2}$ are two resonant monomials of
$\delta$-orders $m_1$ and $m_2$ respectively, then
\[
\{z^{k_1}\bar z^{l_1},z^{k_2}\bar z^{l_2}\}=(k_1l_2-k_2l_1)z^{k_1+k_2-1}\bar z^{l_1+l_2-1}
\]
is a resonant monomial of $\delta$-order $m\ge m_1+m_2-1$.
\end{lemma}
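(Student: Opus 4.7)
The plan is a direct computation of the Poisson bracket followed by a short $\delta$-order estimate that exploits the triangle inequality and the hypothesis $n\ge 4$.

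First I would verify the bracket formula itself. Differentiating the two monomials with respect to $z$ and $\bar z$ under the convention $\{f,g\}=\partial_z f\,\partial_{\bar z}g-\partial_{\bar z}f\,\partial_z g$ fixed implicitly in Section~\ref{Se:prep}, one immediately obtains the single monomial $(k_1l_2-k_2l_1)z^{k_1+k_2-1}\bar z^{l_1+l_2-1}$ displayed in the statement. Resonance is equally immediate: since $k_i\equiv l_i\pmod n$ for $i=1,2$, the difference of the new exponents $(k_1+k_2-1)-(l_1+l_2-1)=(k_1-l_1)+(k_2-l_2)$ is again divisible by $n$.

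The only substantive step is the $\delta$-order inequality. Using the closed form $\delta(z^k\bar z^l)=\tfrac12(k+l)-\tfrac{n-4}{2n}|k-l|$ from (\ref{Eq:deltamon}) and writing $d_i=k_i-l_i$, the $\tfrac12(k+l)$ parts cancel in the difference $\delta-(m_1+m_2-1)$ up to the constant $-1-(-1)=0$, leaving
\[
\delta\bigl(z^{k_1+k_2-1}\bar z^{l_1+l_2-1}\bigr)-(m_1+m_2-1)=\frac{n-4}{2n}\bigl(|d_1|+|d_2|-|d_1+d_2|\bigr).
\]
The right-hand side is non-negative because $n\ge 4$ makes the prefactor non-negative while the triangle inequality gives $|d_1|+|d_2|\ge|d_1+d_2|$; this yields the desired bound $m\ge m_1+m_2-1$.

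I do not anticipate any real obstacle. The only points requiring care are to keep the sign convention of the Poisson bracket consistent with the one fixed earlier in the paper, and to note that the hypothesis $n\ge 4$ is used precisely to ensure the prefactor $(n-4)/(2n)$ is non-negative; for $n=3$ the triangle-inequality bound would have the opposite sign, which is consistent with the separate treatment of the strong resonance $n=3$ in Section~\ref{Se:n3}.
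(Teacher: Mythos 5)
Your proposal is correct and follows essentially the same route as the paper: both use the closed form $\delta(z^k\bar z^l)=\tfrac12(k+l)-\tfrac{n-4}{2n}|k-l|$ from (\ref{Eq:deltamon}) and reduce the inequality to the non-negativity of $\tfrac{n-4}{2n}\bigl(|k_1-l_1|+|k_2-l_2|-|k_1-l_1+k_2-l_2|\bigr)$ via the triangle inequality and $n\ge4$. Your closing remark about why $n=3$ fails is a nice observation consistent with the paper's separate treatment of that case.
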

\begin{proof}
The Poisson bracket of the monomials has the form $\mathrm{const}\,z^k\bar z^l$ with $k=k_1+k_2-1$
and $l=l_1+l_2-1$. Using the second formula from the definition of the $\delta$ order (\ref{Eq:deltamon})
we get
\begin{eqnarray*}
m&=&\frac12(k+l)-\frac{n-4}{2n}|k-l|\\
&=&
\frac12(k_1+k_2+l_1+l_2-2)-\frac{n-4}{2n}|k_1+k_2-l_1-l_2|\,.
\end{eqnarray*}
Then we rewrite it in the form
\begin{eqnarray*}
m&=&
\frac12(k_1+l_1)-\frac{n-4}{2n}|k_1-l_1|+\frac12(k_2+l_2)-\frac{n-4}{2n}|k_2-l_2|-1\\
&&\qquad +
\frac{n-4}{2n}\left(
|k_1-l_1| +|k_2-l_2|-
|k_1-l_1+k_2-l_2|
\right)\,.
\end{eqnarray*}
Since the last parenthesis is not negative and $n\ge4$ we conclude
\begin{eqnarray*}
m&\ge&
\frac12(k_1+l_1)-\frac{n-4}{2n}|k_1-l_1|+\frac12(k_2+l_2)-\frac{n-4}{2n}|k_2-l_2|-1\\
&=&m_1+m_2-1
\end{eqnarray*}
which completes the proof of the lemma.
\end{proof}

Now we state a proposition which is the central part of our main theorem.

\begin{proposition}\label{Pro:n5}
If $n\ge5$ and $h_{22},h_{n0}\neq 0$ there exists a formal canonical change of variables which
transforms a formal real-valued Hamiltonian\/ {\rm (\ref{Eq:h_n=5})}
into
\begin{equation}\label{Eq:cnfforn}
\tilde h:=(z\bar{z})^{2}A(z\bar{z})+(z^{n}+\bar z^n)B(z^2\bar z^2),
\end{equation}%
where $A,B\in \mathbb{R}[[z\bar{z}]]$ (formal series
with real coefficients in the single variable $z\bar z$)
and $A(0)=h_{22}$, $B(0)=|h_{n0}|$.
Moreover, the coefficients of the series $A$ and $B$ are defined uniquely.
\end{proposition}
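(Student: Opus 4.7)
The plan is to reduce the symmetric Hamiltonian (\ref{Eq:h_n=5}) to the target (\ref{Eq:cnfforn}) inductively by $\delta$-order, applying at step $s\ge 2$ a canonical change of variables $\exp(L_{\chi_s})$ generated by some $\chi_s\in\mathcal H^n_s$ so as to normalise the $\delta$-order-$(s+1)$ part of $h$. Before starting, one absorbs the phase of $h_{n0}$ into a rotation $z\mapsto e^{i\theta}z$, so that the leading part $h_2=h_{22}(z\bar z)^2+h_{n0}z^n+h_{0n}\bar z^n$ already has $h_{n0}=h_{0n}=|h_{n0}|>0$; this accounts for the normalisation $A(0)=h_{22}$, $B(0)=|h_{n0}|$.

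I would first read off the target subspace. Expanding gives $(z\bar z)^2A(z\bar z)=\sum_{k\ge0}a_kQ_{k+2,0}$ and $(z^n+\bar z^n)B(z^2\bar z^2)=\sum_{k\ge0}b_k(Q_{2k+2,1}+Q_{2k+2,-1})$ with $a_k,b_k\in\mathbb R$, so the target slice $\mathcal T^n_m\subset\mathcal H^n_m$ equals $\mathbb R\,Q_{m,0}$ when $m$ is odd and $\mathbb R\,Q_{m,0}\oplus\mathbb R(Q_{m,1}+Q_{m,-1})$ when $m$ is even. Lemma~\ref{Le:ordershift} shows that $\exp(L_{\chi_s})$ leaves $\delta$-orders $\le s$ untouched and adds $L_{\chi_s}h_2=:\mathrm{ad}_{h_2}(\chi_s)$ to the $\delta$-order-$(s+1)$ part of $h$ modulo $\delta$-orders $\ge s+2$. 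The hypothesis $n\ge5$ is essential so that the off-$\delta$ remainders from $\{h_2,Q_{s,j}\}$ land at $\delta$-order $\ge s+n-3\ge s+2$. The inductive step therefore reduces to the homological equation
\[
\mathrm{ad}_{h_2}(\chi_s)\equiv -k_{s+1}\pmod{\mathcal T^n_{s+1}}\quad\text{in }\mathcal H^n_{s+1},
\]
where $k_{s+1}$ is the $\delta$-order-$(s+1)$ part of $h$ at the start of the step.

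The core of the argument is the direct-sum decomposition
\[
\mathcal H^n_{s+1}=\mathrm{Im}\bigl(\mathrm{ad}_{h_2}\colon\mathcal H^n_s\to\mathcal H^n_{s+1}\bigr)\oplus\mathcal T^n_{s+1}\qquad(s\ge2).
\]
A direct monomial computation yields, for $j\ge1$,
\[
\mathrm{ad}_{h_2}(Q_{s,j})|_{\delta=s+1}=2njh_{22}\,Q_{s+1,j}-n(s-2j)h_{n0}\,Q_{s+1,j+1},
\]
together with $\mathrm{ad}_{h_2}(Q_{s,0})|_{\delta=s+1}=ns(h_{0n}Q_{s+1,-1}-h_{n0}Q_{s+1,1})$ and the conjugate formulas for $j<0$; in particular no image has a $Q_{s+1,0}$ component. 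Passing to the real basis formed by the \emph{symmetric} vectors $Q_{m,0}$, $Q_{m,j}+Q_{m,-j}$ and the \emph{anti-symmetric} vectors $i(Q_{m,j}-Q_{m,-j})$, one sees that $\mathrm{ad}_{h_2}$ swaps the two parities (because $\{f,g\}$ of two real-valued functions is pure imaginary, and multiplication by $2i$ exchanges symmetric and anti-symmetric). Since $\mathcal T^n_{s+1}$ lies entirely on the symmetric side, the triviality of $\mathrm{Im}\cap\mathcal T^n_{s+1}$ reduces on each parity to the invertibility of a bidiagonal upper-triangular matrix whose diagonal entries are multiples of $h_{22}$ or $h_{n0}$, both nonzero. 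A dimension count --- $\dim\mathcal H^n_s=1+2\lfloor s/2\rfloor$, $\dim\mathcal T^n_{s+1}$ as above, and $\dim\ker\mathrm{ad}_{h_2}$ equal to $1$ for even $s$ (kernel generated by $h_2^{s/2}|_{\delta=s}$) and $0$ for odd $s$ --- closes the decomposition.

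Existence and uniqueness of $A$ and $B$ then follow immediately: $\chi_s$ may be chosen to cancel the non-target part of $k_{s+1}$, while the residual $\mathcal T^n_{s+1}$-component is independent of the (non-unique) choice of $\chi_s$ because the decomposition is direct. Iterating over $s$ produces the unique formal series $A,B\in\mathbb R[[z\bar z]]$ with the prescribed leading values. The hard part will be the third-paragraph linear algebra: verifying invertibility of the bidiagonal system on each parity uniformly in $s$, including at the boundary index $|j|=\lfloor(s+1)/2\rfloor$ where the chain coefficient $s-2j+2$ degenerates and the assignment of each $Q_{s+1,\pm\lfloor(s+1)/2\rfloor}$ to a parity depends on the parity of $s$ itself.
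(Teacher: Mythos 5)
Your existence argument is essentially the paper's: the same rotation normalising $\arg h_{n0}$, the same induction on $\delta$-order via $\exp(L_{\chi_s})$, the same homological operator $\chi_s\mapsto[L_{\chi_s}h_2]_{s+1}$, and the same complement ($\mathbb R\,Q_{m,0}$ for odd $m$, $\mathbb R\,Q_{m,0}\oplus\mathbb R(Q_{m,1}+Q_{m,-1})$ for even $m$). Your remark that $n\ge5$ forces the $\bar z^n$-contribution of the bracket to land at $\delta$-order $s+n-3\ge s+2$ is exactly the reason the matrix is bidiagonal here and only tridiagonal for $n=4$. The one presentational difference is that you split the real-valued space into symmetric and anti-symmetric parts and use that the homological operator swaps them, whereas the paper works with the complex coefficient vectors $(c_0,\dots,c_k)\mapsto(d_1,\dots,d_{k+1})$ and identifies real-valuedness with $\{\Im c_0=0\}$; the triangular determinants $(-2ib_0n)^{k+1}(2k)!!$ and $(4ia_0n)^k k!$ are the same computation in either language. (Minor slip: your displayed coefficients drop the factor $2i$ coming from $L_\chi g=-2i\{g,\chi\}$; harmless.)

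The genuine gap is in uniqueness. You assert that ``the residual $\mathcal T^n_{s+1}$-component is independent of the choice of $\chi_s$ because the decomposition is direct.'' Directness only pins down the order-$(s+1)$ coefficient \emph{given the lower-order choices}. For even $s$ the kernel of the homological operator is nontrivial (spanned by $h_2^{s/2}$), so $\chi_s$ is determined only modulo this kernel, and the kernel component feeds into $k_{s+2},k_{s+3},\dots$ through the quadratic and higher terms of the Lie series; your argument does not show that the higher-order normal-form coefficients are unaffected. Moreover, uniqueness must be established against an \emph{arbitrary} formal canonical conjugacy between two Hamiltonians of the form (\ref{Eq:cnfforn}), not only against the transformations produced by your construction. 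The paper closes both holes at once: if $\tilde h'=\exp(L_\chi)\tilde h$ with both sides in normal form, let $p$ be the lowest $\delta$-order of $\chi$; comparing order $p+1$ forces $[\chi]_p\in\ker L_p$, hence $p$ even and $[\chi]_p=\alpha[h_2^{p/2}]_p$; since $\{\tilde h^{p/2},\tilde h\}=0$ the flow of $\tilde h^{p/2}$ fixes $\tilde h$, so composing with $\exp(-\alpha L_{\tilde h^{p/2}})$ and re-interpolating via Theorem~\ref{Theorem_Interpol} replaces $\chi$ by a generator of $\delta$-order at least $p+2$, and one iterates. Some version of this step --- pushing the generator to higher order by exploiting that kernel elements generate symmetries of $\tilde h$ itself --- is needed; without it the claim that the coefficients of $A$ and $B$ are unique is unsupported.
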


\begin{proof}
The proposition is proved by induction. We perform a sequence of canonical
coordinate changes normalising one  $\delta$-order of the formal
Hamiltonian at a time.

Let us write $[h]_p$ to denote the terms of the $\delta$-order $p$
in the formal series $h$. In particular,
$[h]_2=h_{n0}z^n+h_{22}z^2\bar z^2+h_{0n}\bar z^n$.
The rotation $z\mapsto z\exp(-i\arg(h_{n0})/n )$ transforms
it into
\begin{equation}\label{Eq:h2}
h_2:=[h]_2=b_{0}z^n+a_{0}z^2\bar z^2+b_{0}\bar z^n
\end{equation}
where $b_0=|h_{n0}|$ and $a_0=h_{22}$ are both real and positive.
We keep the same letter $h$
for the transformed Hamiltonian hoping that it will cause no confusion.
After the substitution the leading term of $h$ has the desired form.

All other substitutions are constructed using Lie series (see e.g. \cite{Deprit1969}).
Take a polynomial $\chi_p\in\mathcal H^n_p$, $p\ge2$, and make a substitution
generated by $\chi_p$. By (\ref{Eq:gL}), the new Hamiltonian takes the form
\[
\tilde h=h+L_{\chi_p}h+\sum_{k\ge2}\frac1{k!}L^k_{\chi_p}h\,.
\]
Lemma~\ref{Le:ordershift} implies that the series
$L^k_{\chi_p}h$ starts with the $\delta$-order $k(p-1)+2$
or higher. Therefore each term of $\tilde h$ depends only
on a finite number of terms in the series $h$. Moreover,
for $2\le m\le p$ we get
\[
[\tilde h]_m=[h]_m
\]
and
\begin{equation}\label{Eq:homol}
[\tilde h]_{p+1}=[h]_{p+1}+\left[L_{\chi_p}h_2\right]_{p+1}\,.
\end{equation}
We choose $\chi_p$ to transform this $\delta$-order to
the desired form. For this purpose let us consider
the linear operator $L_p:\mathcal H^n_p\to \mathcal H^n_{p+1}$
defined by
\[
L_p\chi_p= \left[L_{\chi_p}h_2\right]_{p+1}\,.
\]
It is sometimes called the {\em homological operator}.
We will find a subspace complement to $L_p(\mathcal H^n_p)$ in $\mathcal H^n_{p+1}$
and choose $\chi_p$ to ensure that $[\tilde h]_{p+1}$ belongs to this
subspace. The properties of $L_p$ are
slightly different for odd and even values of $p$. Let us state these properties first.

If $p=2k+1$ is odd the kernel of $L_p$ is trivial.
Lemma~\ref{Le:dimhp} implies that $\dim\mathcal H^n_{p+1}=\dim\mathcal H^n_p+2$.
Therefore
\[
\mbox{co-dim}\,\mathrm{Image}(L_{2k+1})=2\,.
\]

If $p=2k$ is even, then
$\dim\mathcal H^n_{p+1}=\dim\mathcal H^n_p$.
The kernel of $L_p$ is one-dimensional:
it is generated by multiples of $h_2^k$. Therefore
\[
\mbox{co-dim}\,\mathrm{Image}(L_{2k})=1\,.
\]
In order to prove these claims
and provide an explicit description
for the complements,
we find a matrix which describes $L_p$.
We note that any polynomial from $\mathcal H^n_p$ can
be written in the form
\[
\chi_p=c_{0}Q_{p,0}+\sum_{j=1}^{\ipart{\frac p2}}
(c_{j}Q_{p,j}+c_{j}^* Q_{p,-j})
\]
where $c_0$ is real and $c_j$ with $j\ge1$ may be complex.
The monomials $Q_{p,j}$ are defined by (\ref{Eq:Pmj}).
Using (\ref{Eq:h2}) we compute the action of $L_p$ on monomials:
\[
L_p(z^p\bar z^p)=-2i b_0 np\left( z^{n+p-1}\bar z^{p-1}-z^{p-1}\bar z^{n+p-1}
\right)
\]
and
\begin{eqnarray*}
L_p(z^{p-2j+nj}\bar z^{p-2j})&=&
4i a_0 n j \, z^{p-2j+nj +1}\bar z^{p-2j+1}\\&&
-2i b_0 n(p-2j) \, z^{p-2j+nj+n-1}\bar z^{p-2j-1}\,.
\end{eqnarray*}
These formulae can be rewritten:
\begin{eqnarray*}
L_p(Q_{p,0})&=&-2i b_0 np Q_{p+1,1}+ 2i b_0 np Q_{p+1,-1} \,,\\
L_p(Q_{p,j})&=&
4i a_0 n j \, Q_{p+1,j}%z^{p-2j+nj +1}\bar z^{p-2j+1}
-2i b_0 n(p-2j) \, Q_{p+1,j+1}% z^{p-2j+nj+n-1}\bar z^{p-2j-1}
\,,
\qquad
1\le j\le\ipart{\frac{p}2}\,.
\end{eqnarray*}
Since $L_p\chi_p\in\mathcal H^n_{p+1}$ we can represent it in the form
\[
L_p\chi_p
=
d_{0}Q_{p+1,0}+\sum_{j=1}^{\ipart{\frac {p+1}2}}
(d_{j}Q_{p+1,j}
+d_{j}^*Q_{p+1,-j}
)
\]
for some constant $d_j$, $0\le j\le \ipart{\frac{p+1}2}$. From the explicit
formulae we see that the image of $L_p$ does not contain terms
proportional to $Q_{p+1,0}=z^{p+1}\bar z^{p+1}$. Therefore the complement to the image
is at least one dimensional. In the image we get $d_0=0$ and
\begin{equation}\label{Eq:dj}
d_j=4i a_0nj\,c_j-2ib_0n(p-2j+2)\,c_{j-1}
\qquad\mbox{for}\quad
1\le j\le \ipart{\frac{p}2}\,.
\end{equation}
If $p=2k+1$ is odd, there is an additional equality:
\[
d_{k+1}=-2ib_0n\,c_{k}\,.
\]
In this case the map $L_p$ considered as
an operator  which maps $(c_0,\dots,c_k)\mapsto(d_1,\dots,d_{k+1})$
is a linear isomorphism of $\mathbb C^{k+1}$. Indeed,
the corresponding matrix is triangle and its
determinant equals to the product
of the diagonal elements: $(-2ib_0n)^{k+1}(2k)!!$.
In this representation the space
 $\mathcal H^n_p$ of real-valued
Hamiltonians  is identified with
the subspace $\{\,\Im c_0=0\,\}$ of $\mathbb C^{k+1}$.
The operator $L_p$ maps
the vector $(i ,0,\dots,0)$ into $(2b_0n p ,0,\ldots,0)$.
We see that the preimage of the real-valued polynomial
$Q_{p+1,1}+Q_{p+1,-1}$ is not real-valued.
Therefore the complement to $L_{2k+1}(\mathcal H^n_{2k+1})\subset \mathcal H^n_{2k+2}$
is two dimensional and consists of polynomials of the form
\begin{equation}\label{Eq:podd}
d_0 Q_{p+1,0 }
+
d_1 (Q_{p+1,1}+Q_{p+1,-1})
\end{equation}
with $d_0,d_1\in\mathbb R$.

Now consider the case of $p=2k$. First we restrict the operator $L_{2k}$
onto the vectors with $c_0=0$ and note that
$L_{2k}:(0,c_1,\ldots,c_k)\mapsto(0,d_1,\ldots,d_k)$.
Equation (\ref{Eq:dj}) implies that this map is
a linear isomorphism (of $\mathbb C^k$).
Indeed, the corresponding matrix is triangle
and its determinant is the product of its diagonal elements: $ (4i a_0n)^k\,k!\ne0$
and the matrix is invertible. Therefore the complement
to $L_{2k}(\mathcal H^n_{2k})\subset \mathcal H^n_{2k+1}$
is one dimensional and consists of monomials of the form
\begin{equation}\label{Eq:peven}
d_0 Q_{p+1,0}
\end{equation}
where $d_0\in\mathbb R$ due to real-valuedness.

We conclude that in the homological equation (\ref{Eq:homol})
the auxiliary polynomial $\chi_p$ can be chosen in
such a way that $[\tilde h]_{p+1}$ is either of the form
(\ref{Eq:podd}) or~(\ref{Eq:peven}).

We continue inductively
starting with the $\delta$-order $3$.
We note that the substitution
$\Phi _{\chi _{p}}^{1}$
does not change $\delta$-orders $k\leq p$ and
the composition of the changes is a well-defined formal series.
Therefore the original Hamiltonian $h$ can be transformed
in such a way that each order is either of the form
(\ref{Eq:podd}) or~(\ref{Eq:peven}).
Taking into account the definition of $Q_{p,j}$ we see that
$h$ is transformed to the desired form (\ref{Eq:cnfforn}).

\medskip

In order to complete the proof we need to establish uniqueness of
the series (\ref{Eq:cnfforn}). We note that the transformation
constructed in the first part of the proof is not unique
because the kernel of $L_{2k}$ is not empty.
Nevertheless the normalised Hamiltonian is unique.
Indeed, suppose that two Hamiltonians of the form
(\ref{Eq:cnfforn}) are conjugate, {\em i.e.}, there is a formal
Hamiltonian $\chi$ such that
\begin{equation}\label{Eq:subst}
\tilde h'=\exp(L_\chi)\tilde h\,.
\end{equation}
Let $p$ be the lowest $\delta$-order of the formal series $\chi$.
Then
\[
[\tilde h']_m=[\tilde h]_m
\]
for $2\le m\le p$ and
\[
[\tilde h']_{p+1}=[\tilde h]_{p+1}+L_p([\chi]_p)\,.
\]
Since both $[\tilde h']_{p+1}$ and $[\tilde h]_{p+1}$ are in the complement
to the image of $L_p$ we conclude that
\([\tilde h']_{p+1}=[\tilde h]_{p+1}\)
and $L_p([\chi]_p)=0$. Therefore $[\chi]_p$
is in the kernel of $L_p$. Since for
all odd $p$ the kernel is trivial, $p$ is even.
For an even $p$ the kernel is one dimensional
and consequently $[\chi]_p=\alpha [h_2^{p/2}]_p$
for some $\alpha\ne0$. Then
obviously $\tilde h=\exp(-\alpha L_{\tilde h^{p/2}})\tilde h$
and we obtain
\[
\tilde h'=\exp(L_\chi)\exp(-\alpha L_{\tilde h^{p/2}})\tilde h\,.
\]
The composition of two tangent to identity maps is also tangent to
identity, and Theorem~\ref{Theorem_Interpol} implies that there is
a formal series $\tilde \chi$ such that
\[
\exp(L_{\tilde\chi})=\exp(L_\chi)\exp(-\alpha L_{\tilde h^{p/2}})\,.
\]
Then $\tilde h'=\exp(L_{\tilde\chi})\tilde h$.
We obtained an equation of the form (\ref{Eq:subst})
but the lowest $\delta$-order of $\tilde\chi$ is at least $p+2$.
Then the argument can be repeated starting with (\ref{Eq:subst})
to show that $\tilde h$ and $\tilde h'$ coincide at all orders.
\end{proof}

\section{Forth order resonance\label{Se:n4}}

In the case $n=4$ the construction of the unique normal form
is similar to the construction used for the case of a weak resonance.
Nevertheless this case is to be considered separately since
the matrix of the homological operator is not  triangle.

The Hamiltonian $h$ is given by
\begin{equation*}
h(z,\bar{z})=
\sum_{\substack{ k+l\geq 4 \\ k=l\pmod 4}}h_{kl}z^{k}\bar{z}^{l}.
\end{equation*}
In the case of $n=4$, the $\delta$-order defined by (\ref{Eq:deltamon})
is just a half of the usual order of a polynomial.

The leading terms of the series are of order 4
and correspond to $(k,l)$ equal to
$(4,0)$, $(2,2)$ and $(0,4)$.
The coefficient $h_{22}$ is real due to the real valuedness.
Without loosing in generality we can assume that
$h_{40}$ is real and positive which can be achieved by rotating
the complex plane using the substitution:
$z\mapsto e^{-i\arg(h_{40})/4 }\,z$.
Then taking into account the real valuedness of $h$
we can write the terms of order four in the following form:
\begin{equation}
h_{2}(z,\bar{z})=a_0z^{2}\bar{z}^{2}+b_{0}(z^4+\bar z^{4}),
\label{n=4_LeadingOrder}
\end{equation}
where $a_0=h_{22}$ and $b_0=|h_{40}|$.

\begin{proposition}\label{Pro:n4}
If $h_{40}\ne0$, there is a formal canonical change
of variables which transforms $h(z,\bar{z})$
into
\begin{equation*}
\widetilde{h}(z,\bar{z})
=z^{2}\bar{z}^{2}A(z\bar{z})+(z^4+\bar z^4)B(z^2\bar{z}^2),
\end{equation*}%
where $A$ and $B$ are series in one variable with real coefficients,
$A(0)=h_{22}$, $B(0)=|h_{40}|$.
Moreover the coefficients of the series $A$ and $B$ are unique.
\end{proposition}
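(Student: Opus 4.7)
The plan is to mirror the proof of Proposition~\ref{Pro:n5} (the weak resonance case), with one technical complication: for $n=4$ the matrix of the homological operator is no longer triangular. For $n=4$ the coefficient $(n-4)/(2n)$ vanishes, so $\delta(z^k\bar z^l)=(k+l)/2$ is just half the total degree, and Lemma~\ref{Le:ordershift} still applies (in fact with the sharper statement that the Poisson bracket raises the $\delta$-order by exactly one).

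After the rotation $z\mapsto e^{-i\arg h_{40}/4}z$ brings $h_2$ to the form (\ref{n=4_LeadingOrder}) with $a_0,b_0>0$, we proceed inductively on the $\delta$-order. At step $p\ge2$ we solve the homological equation
\begin{equation*}
[\tilde h]_{p+1}=[h]_{p+1}+L_p(\chi_p),\qquad L_p\chi_p:=[L_{\chi_p}h_2]_{p+1},
\end{equation*}
choosing $\chi_p\in\mathcal H^4_p$ so that $[\tilde h]_{p+1}$ lies in $\mathbb R\,Q_{p+1,0}$ when $p+1$ is odd and in $\mathrm{span}_{\mathbb R}\{Q_{p+1,0},\;Q_{p+1,1}+Q_{p+1,-1}\}$ when $p+1$ is even; these are exactly the components of the target $z^2\bar z^2 A(z\bar z)+(z^4+\bar z^4)B(z^2\bar z^2)$ at $\delta$-order $p+1$.

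The main technical point is the structure of $L_p$. A direct computation of $\{Q_{p,j},h_2\}$ yields, for $1\le j\le\lfloor p/2\rfloor$,
\begin{equation*}
L_p(Q_{p,j})=8i(p+2j)b_0\,Q_{p+1,j-1}+16i a_0 j\,Q_{p+1,j}-8i(p-2j)b_0\,Q_{p+1,j+1},
\end{equation*}
with the conjugate formula for negative $j$ and $L_p(Q_{p,0})=-8ipb_0(Q_{p+1,1}-Q_{p+1,-1})$. The back-shift $Q_{p+1,j-1}$, coming from $\{\bar z^4,Q_{p,j}\}$, is the source of the tridiagonal structure and the very reason this case must be treated separately from $n\ge 5$: by Lemma~\ref{Le:ordershift} the same term would land in a strictly higher $\delta$-order there. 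The kernel and cokernel are then extracted as follows. Since $h_2^k\in\mathcal H^4_{2k}$ and $\{h_2^k,h_2\}=0$, one has $h_2^k\in\ker L_{2k}$. Conversely, writing $\chi_p=\sum_j c_j Q_{p,j}$, a downward recursion in $|j|$ starting from the outermost coefficient of $L_p\chi_p$ (which for odd $p$ forces $c_{\pm\lfloor p/2\rfloor}=0$, whereas for even $p$ it only produces a linear relation $c_{j-1}=\gamma_j c_j$) shows $\ker L_{2k+1}=0$ and $\ker L_{2k}=\mathbb R h_2^k$. Combined with Lemma~\ref{Le:dimhp}, this gives $\mathrm{codim\,Image}(L_p)$ equal to $1$ for even $p$ and $2$ for odd $p$. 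Transversality of the proposed complements is checked directly from the formulas: the $Q_{p+1,0}$ coefficient of $L_p\chi_p$ is produced only by $c_{\pm 1}$, and this contribution unavoidably drags along nonzero $Q_{p+1,\pm 1}$ and $Q_{p+1,\pm 2}$ components that cannot be cancelled by any other $c_j$.

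Uniqueness is obtained exactly as at the end of the proof of Proposition~\ref{Pro:n5}: if $\tilde h'=\exp(L_\chi)\tilde h$ for two normal forms and $p$ is the lowest $\delta$-order of $\chi$, matching at order $p+1$ forces $[\chi]_p\in\ker L_p$, so $p$ must be even and $[\chi]_p=\alpha[\tilde h^{p/2}]_p$ for some $\alpha\in\mathbb R$. Since $\exp(-\alpha L_{\tilde h^{p/2}})$ fixes $\tilde h$, Theorem~\ref{Theorem_Interpol} lets us rewrite $\exp(L_\chi)\exp(-\alpha L_{\tilde h^{p/2}})=\exp(L_{\tilde\chi})$ with $\tilde\chi$ of strictly higher lowest $\delta$-order, and we iterate. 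The main obstacle is thus the rank and cokernel analysis of the tridiagonal $L_p$; everything else transcribes directly from Section~\ref{Se:n5}.
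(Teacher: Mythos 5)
Your proposal follows the paper's proof essentially verbatim: the same rotation normalising $h_{40}$, the same tridiagonal homological operator with the same matrix entries, the same rank/kernel analysis giving codimension $1$ for even $p$ and $2$ for odd $p$, the same complement subspaces, and the same uniqueness argument via the kernel generated by powers of $h_2$. The one soft spot is your justification of transversality of the complement: the claim that the $Q_{p+1,\pm1}$ and $Q_{p+1,\pm2}$ components produced by $c_{\pm1}$ ``cannot be cancelled by any other $c_j$'' is not literally true (e.g.\ $c_0$, $c_{\pm2}$ and $c_{\pm3}$ do contribute to those rows of the tridiagonal matrix), so this step needs the global argument the paper uses --- replace the central column of the square matrix by $(0,\dots,d_0,\dots,0)^{T}$ and check that the resulting block-tridiagonal determinant is nonzero --- rather than a local ``drags along'' observation.
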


\begin{proof} First we note that the resonant terms correspond
to $k=l \pmod 4$ which is equivalent to
\begin{equation*}
k=l+4j,\quad j\in \mathbb{Z}\,.
\end{equation*}
Then
\begin{equation*}
k+l=2l+4j,\quad j\in \mathbb{Z}
\end{equation*}%
which implies that $k+l$ are all even.
It is convenient to illustrate distribution
of the resonant terms using the diagram shown on Figure~\ref{Fig:n4res}.
\begin{figure}
\begin{center}
\includegraphics{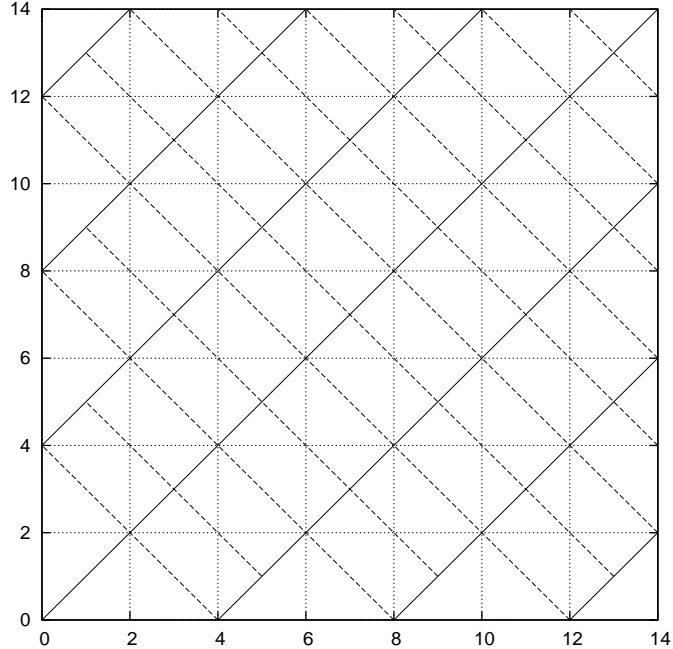}
\end{center}
\caption{On the $(k,l)$ plane, resonant terms in the Hamiltonian for $n=4$
correspond to intersections of solid and dashed lines.
The dashed lines connect the terms of equal $\delta$-order\label{Fig:n4res}}
\end{figure}

We will prove the proposition by induction transforming
the Hamiltonian to the desired form order by order.
On each step we will need to solve a homological equation
which involves the operator $L_2$ defined by
\begin{equation*}
L_{2}(\chi )=2i\left\{ \chi ,h_{2}\right\} .
\end{equation*}
Before proceeding further
let us study the action of this operator on homogeneous
polynomials.
Let us introduce $\mathcal{H}_{m}^{4}$ as the space of
resonant terms of order $2m$. Unlike the previous section,
where we used the real vector spaces,
it is more convenient to assume that $\mathcal{H}_{m}^{4}$
consists of linear combination of resonant monomials
\[
Q_{m,j}=z^{m+2j}\bar z^{m-2j},\qquad{-\ipart{\frac m2}\le j\le \ipart{\frac m2},}
\]
with complex coefficients. Consequently,
\begin{equation}
\dim \mathcal{H}_{m}^{4}
=2\left\lfloor \frac{m}{2}\right\rfloor +1\,.
\label{n=4_NoOfTerms}
\end{equation}%
Since $\left\lfloor \cdot \right\rfloor $ denotes the integer part of a number, the number of the
resonant monomials of a given order form the sequence $3,3,5,5,7,7,\ldots $ (see
Figure~\ref{Fig:n4res}).

Then real-valued polynomials form a (real) subspace in
$\mathcal{H}_{m}^{4}$ (the coefficients in front of $Q_{m,\pm j}$ are mutually complex
conjugate).

Taking into account (\ref{n=4_LeadingOrder})
we obtain%
%
%\footnote{In particular, the action of $L_{2}$ on a monomial $z^k\bar z^l$ is given by
%\begin{equation*}
%L_{2}(z^{k}\bar{z}^{l})=4ia_{0}(k-l)z^{k+1}\bar{z}^{l+1}+8ib_{0}\left(
%kz^{k-1}\bar{z}^{l+3}-lz^{k+3}\bar{z}^{l-1}\right) .
%\end{equation*}}
%
\begin{eqnarray*}
L_{2}(\chi ) &=&2i\left\{ \chi ,h_{2}\right\}  \\
&=& 4ia_{0}\left( z^{2}\bar{z}\frac{\partial \chi }{\partial z}-z%
\bar{z}^{2}\frac{\partial \chi }{\partial \bar{z}}\right) +8ib_{0}\left(
\bar{z}^{3}\frac{\partial \chi }{\partial z}-z^{3}\frac{\partial \chi }{%
\partial \bar{z}}\right)  .
\end{eqnarray*}
We see that if $\chi$ is a homogeneous polynomial of order $p$
then $L_2(\chi)$ is a homogeneous polynomial of order $p+2$.
Moreover $L_2$ maps resonant monomials into resonant ones.
Then
\begin{equation*}
L_{2}:\mathcal{H}_{m}^{4}\rightarrow \mathcal{H}_{m+1}^{4}.
\end{equation*}%
Let us find a subspace complement to $L_2(\mathcal{H}_{m}^{4})$
in $\mathcal{H}_{m+1}^{4}$.

A straightforward substitution into
the definition of $L_2$ shows that
\begin{eqnarray*}
L_2(Q_{m,j})&=&
16ia_{0}j Q_{m+1,j}
\\&&
+
8ib_{0}
(m+2j)Q_{m+1,j-1}%z^{m+2j-1}\bar{z}^{m+3-2j}
%\\&&
-8ib_{0}(m-2j)Q_{m+1,j+1}%z^{m+2j+3}\bar{z}^{m-2j-1}
\,.
\end{eqnarray*}
Then $L_2$ is described by a tridiagonal
matrix with coefficients $s_{kj}$ given by
\begin{equation}\label{Eq:L2matrix}
s_{j,j+ 1}=- 8ib_{0}(m-2j),\qquad
s_{j,j- 1}=8ib_{0}(m+2j),\qquad
s_{j,j}=16ia_{0}j\,.
\end{equation}
We note that equation $\left( \ref{n=4_NoOfTerms}\right) $ gives us
\begin{equation*}
\dim\mathcal H_{m}^4
=\left\{
\begin{array}{ll}
m\,,&m\text{ odd,} \\
m+1\,,
&
m\text{ even.}
\end{array}%
\right.
\end{equation*}%
So we have to treat two separate cases, namely $m$ odd and $m$ even.

Let us start by considering the case when $m$ is odd. In this case
\begin{equation*}
\dim \mathcal{H}_{m}^{4}=m
\end{equation*}
and
\begin{equation*}
\dim \mathcal{H}_{m+1}^{4}=m+2\,.
\end{equation*}
We see that $L_2$ acts from a space of a lower dimension into
a space of a higher dimension. Its matrix has $(m+2)$ rows and $m$
columns. The non-zero elements are given by (\ref{Eq:L2matrix})
where $-\frac{m-1}2\le j\le\frac{m-1}2$.
For example in the case of $m=5$
the matrix has the following structure
\begin{equation*}
\left[
\begin{array}{ccccc}
x & 0 & 0 & 0 & 0 \\
x & x & 0 & 0 & 0 \\
x & x & x & 0 & 0 \\
0 & x & 0 & x & 0 \\
0 & 0 & x & x & x \\
0 & 0 & 0 & x & x \\
0 & 0 & 0 & 0 & x%
\end{array}%
\right]
\end{equation*}%
where $x$ occupies positions of non-zero elements. It is easy to see that
since $b_0\ne0$ there is
a $m\times m$ block with a non vanishing determinant
(for example the first m rows form a lower diagonal matrix
so its determinant is a straightforward product).
We conclude that if $m$ is odd
\begin{equation*}
\func{rank}\left(L_{2}\right) =m.
\end{equation*}
Since the rank is maximal the kernel is trivial:
\begin{equation*}
\ker (L_{2})=\mathbf{0}.
\end{equation*}
For $m$ even we have
\begin{equation*}
\dim \mathcal{H}_{m}^{4}=
\dim \mathcal{H}_{m+1}^{4}=
m+1\,.
\end{equation*}%
Then $L_2$ is described by a square
$(m+1)\times(m+1)$ tridiagonal matrix.
The non-zero elements are given by (\ref{Eq:L2matrix})
where $-\frac{m}2\le j\le\frac{m}2$.
For example for $m=4$ the matrix takes the form
\begin{equation*}
\left[
\begin{array}{ccccc}
x & x & 0 & 0 & 0 \\
x & x & x & 0 & 0 \\
0 & x & 0 & x & 0 \\
0 & 0 & x & x & x \\
0 & 0 & 0 & x & x%
\end{array}%
\right]
\end{equation*}%
where $x$ occupies places of non-zero elements. To determine the rank of this matrix
we first note that since $b_0\ne0$ the lower left block of size $m\times m$ is
upper-diagonal with a non-zero determinant. Therefore
$
\func{rank}\left(L_{2}\right) \geq m.
$
On the other hand $m$ is even and $h_{2}^{m/2}$ is a
resonant homogeneous polynomial of order $2 m$. It is  in
the kernel of $L_{2}$ because
\begin{equation*}
L_{2}(h_{2}^{m/2})=2i\{h_{2}^{m/2},h_2\}=0\,.
\end{equation*}
Consequently $\func{rank}\left(L_{2}\right) <m+1$.
We conclude that
\begin{equation*}
\func{rank}\left( L_{2}\right) =m
\end{equation*}
and the kernel is one dimensional,
\begin{equation*}
\dim (\ker (L_{2}))=1\,,
\end{equation*}
and consists of elements proportional to $h_{2}^{m/2}$.

Summarising these results we see that
\begin{equation*}
\mbox{co-dim} (L_{2}(\mathcal H_{m}^4)) =
\left\{\begin{array}{ll}2,\qquad &\text{if $m$ is  odd,} \\
1,\qquad &\text{if $m$ is even.}
\end{array}
\right.
\end{equation*}
The real-valued polynomials form a (real) subspace of ${\cal H}^4_{m}$.
The operator $L_2$ preserves real-valuedness.
Consequently, the real co-dimensions of the image of $L_{2}$
restricted on the spaces of real-valued polynomials
are given by the same formula.

Now we need an explicit description for the complements.
A straightforward computation which uses the explicit
formulae for the matrix coefficients (\ref{Eq:L2matrix})
shows if $m$ is odd the polynomials
\[
d_1 z^{m+3}\bar z^{m-1}+d_0z^{m+1}\bar z^{m+1}
+d_1 z^{m-1}\bar z^{m+3}
\]
with real $d_0,d_1$
do not have preimages under $L_2$.
The case of even $m$ is a bit more complicated.
In this case the matrix of $L_2$ is square and
its determinant vanishes. We replace the central column
of this matrix (the one which corresponds to $j=0$)
by the vector $(0,\ldots,0,d_0,0,\ldots,0)^T$
and check that the new matrix has a non-zero
determinant. The computation of the determinant
takes into account that the new
matrix has block structure and each of the blocks
is tridiagonal. Consequently, the added vector is linearly independent
from the columns of the matrix of $L_2$ and belongs
to the complement to its image. Therefore for $m$ even
\[
d_0z^{m+1}\bar z^{m+1}
\]
is not in the image of $L_2$.

We constructed two subspaces of
dimensions two and one respectively
which have trivial intersection with the image of $L_2$.
Consequently, they provide the desired complements
to $L_2(\mathcal H_{m}^4)$.
We note that these complements are described by the same
formulae as in the case $n\ge5$.

\medskip

Now we proceed to the proof of the proposition.
Let $\chi_{m}$ be a real-valued homogeneous
polynomial of order $2m$. Then
\begin{equation*}
\widetilde{h}=h\circ \Phi _{\chi _{m}}^{1}=
\exp(L_{\chi_{m}})h=h+L_{2}(\chi_{m})+O_{2m+4}\,,
\end{equation*}
where $O_{2m+4}$ denotes a formal series without terms of orders
lower than $2m+4$.
We remind that $L_{2}$ increases the order of a homogeneous
polynomial by 2. Therefore
$h$ and $\tilde h$ coincide up to the order $2m+1$ and
\[
[\tilde h]_{m+1}=[h]_{m+1}+L_{2}(\chi_{m})\,.
\]
We choose $\chi_{m}$ in such a way that $[\tilde h]_{m+1}$ is in
the complement to $L_2(\mathcal H_{m}^4)\subset\mathcal H_{m+1}^4$.
Then replace $m$ by $m+1$ and repeat the procedure.

\medskip

The proof of the uniqueness uses essentially the same arguments
as we used in the previous section. Suppose
$h$ can be transformed to two different simplified normal forms
$\tilde h$ and $\tilde h'$ due to non-uniqueness of
transformations to the normal form. Then there is
a canonical transformation $\phi$ such that
\[
\tilde h=\tilde h'\circ\phi\,.
\]
Since the transformation $\phi$ is tangent to identity
there is a formal real-valued Hamiltonian $\chi$ such that
\[
\phi=\Phi^1_{\chi}\,.
\]
Suppose that $2p$ is the lowest order of $\chi$.
Then $\tilde h$ and $\tilde h'$ coincide
up to the order $2p+1$ and
\[
[\tilde h]_{2p+2}=[\tilde h']_{2p+2}+L_2(\chi_{2p})\,.
\]
Since both $[\tilde h]_{2p+2}$ and $[\tilde h']_{2p+2}$
are in the complement subspace to the image of $L_2$,
we conclude that $L_2(\chi_{2p})=0$ and
$[\tilde h]_{2p+2}=[\tilde h']_{2p+2}$.

Moreover either $\chi_{2p}=0$ if $p$ odd, or $\chi_{2p}=c\, h_2^{p/2}$
for some $c\in\mathbb{R}$ if $p$ is even.
Then the change of variables
\[
\tilde \phi= \Phi^1_{\chi} \circ\Phi^1_{-c {\tilde h}^{p/2}}
\]
also transforms $\tilde h'$ into $\tilde h$.
It is easy to check that the corresponding Hamiltonian $\tilde \chi$
starts with order $p+2$.

Repeating the arguments inductively we see that $\tilde h$ and $\tilde h'$
coincide at all orders. Hence the simplified normal form is unique.
\end{proof}

\begin{remark}
In the symplectic polar coordinates $(I,\varphi)$
the normal form
\begin{equation*}
\widetilde{h}(z,\bar{z})=z^{2}\bar{z}^{2}a(z\bar{z})+(z^{4}+\bar z^4)b(z\bar{z}),
\end{equation*}
takes the form
\begin{equation*}
{H}(I,\varphi)=I ^{2}A(I)+I ^{2}B(I )\cos (4\varphi ).
\end{equation*}
We remind that $I=z\bar z/2$, $\varphi=\arg(z)$
or equivalently $z=\sqrt{2I} \,e^{i\varphi}$.
\end{remark}

\section{Third order resonance\label{Se:n3}}

This case is reduced to a resonant Hamiltonian of the form
\begin{equation}\label{Eq:ham3}
h(z,\bar z)=\sum_{\substack{k+l\ge3\\ k=l\pmod 3}}
h_{kl}z^k\bar z^l
\,.
\end{equation}

\begin{proposition}\label{Pro:n3}
If $h_{30}\ne0$, there is a formal canonical change
of variables which transforms $h(z,\bar{z})$
into
\begin{equation}\label{Eq:complexnf3}
\widetilde{h}(z,\bar{z})
=
z^{3}\bar{z}^{3}A(z\bar{z})+(z^3+\bar z^3)B(z\bar{z}),
\end{equation}
where $A$ and $B$ are series in one variable with real coefficients:
\begin{equation*}
A(z\bar z)=\sum_{\substack{k\ge 0\\k\ne 2 \pmod 3}}a_k z^k\bar z^k\,,
\quad
B(z\bar z)=\sum_{\substack{k\ge 0\\k\ne 2 \pmod 3}}b_k z^k\bar z^k\,\,,
\end{equation*}
where $b_0=|h_{30}|$.
Moreover, the coefficients of the series $A$ and $B$ are unique.
\end{proposition}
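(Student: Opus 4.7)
The plan is to mimic the strategy used for $n\ge 5$ and $n=4$: after rotating the complex plane to normalize the leading term, compute the action of the homological operator on a basis of resonant monomials, identify a complement to its image degree-by-degree, and then transform $h$ order-by-order into the prescribed form. Uniqueness will follow from a kernel-chasing argument identical in spirit to that in the proof of Proposition~\ref{Pro:n5}. The features specific to $n=3$ are that the leading nontrivial part of $h$ has \emph{odd} degree $3$ and contains no $z\bar z$-term; as a result, the Poisson bracket with it shifts the total degree by exactly one, and its matrix in the monomial basis is genuinely bidiagonal rather than tridiagonal.

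After the canonical rotation $z\mapsto e^{-i\arg(h_{30})/3}z$, the degree-three part of $h$ becomes $h_3(z,\bar z)=b_0(z^3+\bar z^3)$ with $b_0=|h_{30}|>0$. For $p\ge 3$ let $\mathcal{H}^3_p$ denote the real vector space of real-valued resonant homogeneous polynomials of degree $p$; it is spanned (subject to real-valuedness) by the monomials
\begin{equation*}
Q_{p,j}=z^{(p+3j)/2}\bar z^{(p-3j)/2},\qquad |3j|\le p,\ p+3j\in 2\mathbb{Z}.
\end{equation*}
A Lie-series substitution generated by $\chi\in\mathcal{H}^3_p$ leaves the terms of degree $\le p$ unchanged, and the change in the degree-$(p+1)$ part is governed by the homological operator $L_p\colon\mathcal{H}^3_p\to\mathcal{H}^3_{p+1}$, $L_p(\chi)=2i\{\chi,h_3\}$. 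A direct computation gives
\begin{equation*}
L_p(Q_{p,j})=3ib_0\bigl[(p+3j)\,Q_{p+1,j-1}-(p-3j)\,Q_{p+1,j+1}\bigr],
\end{equation*}
so the matrix of $L_p$ is bidiagonal. The main technical step is to determine the kernel and image of $L_p$. The Jacobi identity gives $h_3^k\in\ker L_{3k}$ for every $k\ge 1$, and we expect that this exhausts the kernel: $\ker L_p=0$ for $3\nmid p$ and $\ker L_{3k}=\mathbb{R}\cdot h_3^k$. Correspondingly, the real codimension of $L_p(\mathcal{H}^3_p)$ in $\mathcal{H}^3_{p+1}$ should be $0$ when $p+1\equiv 1\pmod 3$ and $1$ otherwise, the complement being the real line spanned by $Q_{p+1,0}$ for $p+1$ even and by $Q_{p+1,1}+Q_{p+1,-1}$ for $p+1$ odd. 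These two families of subspaces match exactly the terms allowed in (\ref{Eq:complexnf3}) and explain the exclusion $k\equiv 2\pmod 3$ on the coefficients $a_k,b_k$.

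Both claims about $L_p$ are to be proved by running the bidiagonal recurrence derived from the displayed formula (starting from the extremal indices $|j|\approx\lfloor p/3\rfloor$ and propagating inward) and tracking the single compatibility condition that appears when the recursion passes through $j=0$; this case analysis, which uses $b_0\neq 0$ in an essential way, is the main obstacle of the proof and is expected to occupy most of the technical work. With these facts in hand, the construction proceeds by the standard induction: at each order $p\ge 3$ choose $\chi_p\in\mathcal{H}^3_p$ so that $[\tilde h]_{p+1}$ lies in the chosen complement, noting that formula (\ref{Eq:gL}) guarantees that lower orders remain unchanged, so that the composed substitution is a well-defined formal change of variables bringing $h$ into the form (\ref{Eq:complexnf3}). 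For uniqueness, if $\tilde h'=\exp(L_\chi)\tilde h$ with $\chi$ of lowest degree $p$, then $L_p([\chi]_p)=0$, so the kernel structure forces $p=3k$ and $[\chi]_p=\alpha\tilde h^k$ for some $\alpha\in\mathbb{R}$. Since $\exp(-\alpha L_{\tilde h^k})$ fixes $\tilde h$, Theorem~\ref{Theorem_Interpol} yields a new generator $\tilde\chi$ of lowest degree $\ge p+2$ conjugating $\tilde h$ with $\tilde h'$, and iteration shows $\tilde h=\tilde h'$ termwise.
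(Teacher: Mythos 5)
Your proposal follows the paper's proof essentially verbatim: the same rotation normalizing $h_{30}$, the same monomial basis $Q_{p,j}$, the same bidiagonal formula for $L_p(Q_{p,j})$ (your $3ib_0(p\pm 3j)$ coefficients agree with the paper's $6ib_0\tfrac{p\pm 3j}{2}$), the same kernel/codimension table, the same complements $Q_{p+1,0}$ and $Q_{p+1,1}+Q_{p+1,-1}$, and the same inductive existence and kernel-chasing uniqueness arguments. The one step you defer --- checking via the bidiagonal recurrence that the rank of $L_p$ is maximal, so the kernel is trivial unless $3\mid p$ --- is exactly the step the paper also asserts with minimal detail (``since $b_0\ne 0$ the rank of the matrix is maximal''), so nothing is missing relative to the source.
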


\begin{figure}
\begin{center}
\includegraphics[width=0.6\textwidth]{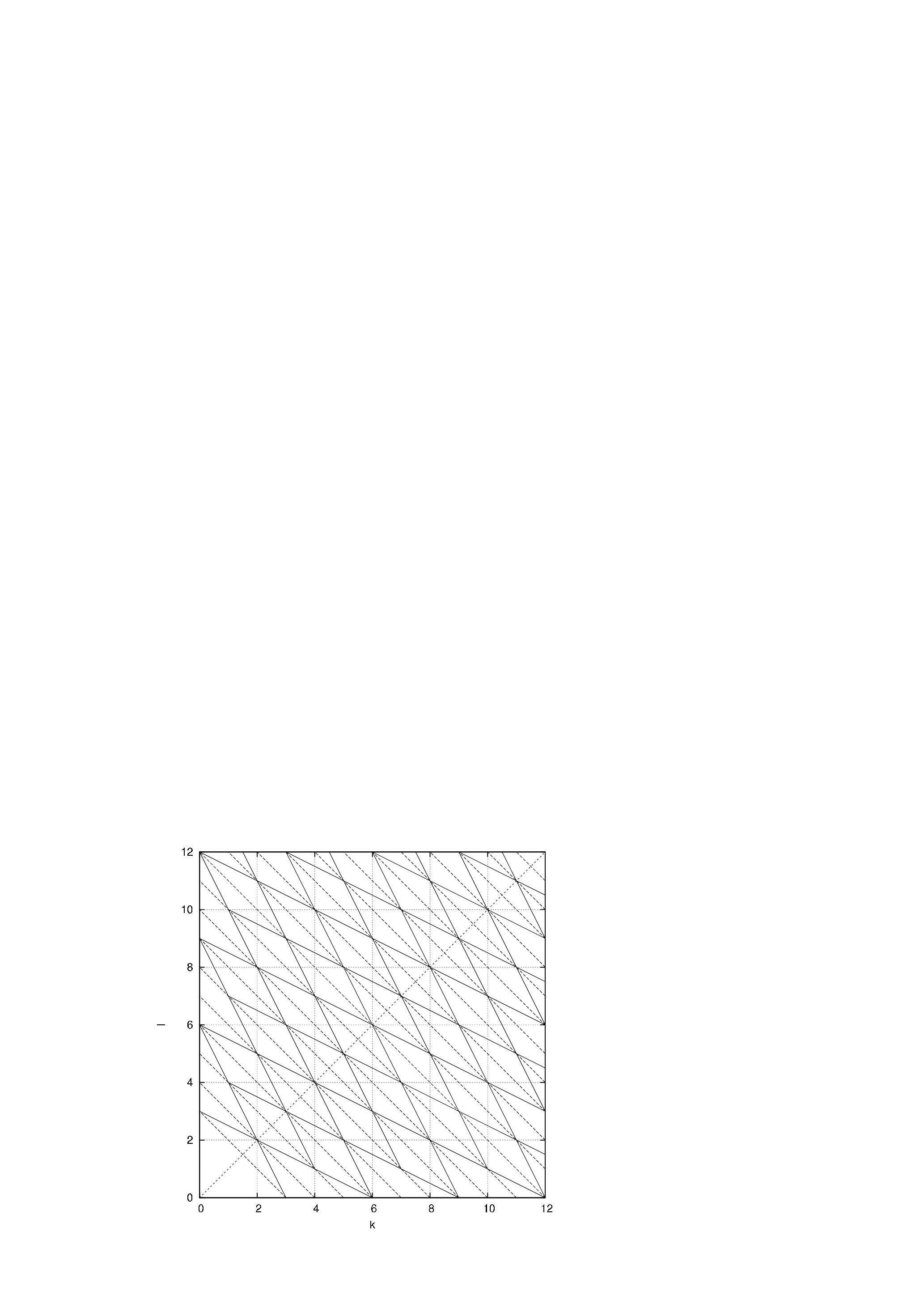}
\end{center}
\caption{Resonant terms for $n=3$ correspond to points of intersections
of solid lines. Dashed lines connects terms of equal orders.\label{Fig:n3}}
\end{figure}

\begin{proof}
Similarly to the previous section a rotation of the
coordinates makes the coefficient of the leading order real
and the Hamiltonian takes the form
\[
h(z,\bar z)=b_0(z^3+\bar z^3)+\sum_{\substack{k+l\ge4\\ k=l\pmod 3}}
h_{kl}z^k\bar z^l
\]
where $b_0=|h_{30}|$.
We group together terms of the same order and define $\mathcal H^3_m$
to be the set of real-valued homogeneous resonant polynomials of
order $m$. It is convenient to represent
the resonant terms using a diagram shown on Figure~\ref{Fig:n3}.
It can be checked by induction that
\begin{eqnarray*}
\dim\mathcal H^3_{3k}&=&k+1,\\
\dim\mathcal H^3_{3k+1}&=&k,\\
\dim\mathcal H^3_{3k+2}&=&k+1\,.
\end{eqnarray*}

The leading order of $h$ is given by
\(
h_3=b_0(z^3+\bar z^3)\,,
\)
and the homological operator has the form
\begin{equation}\label{L3}
L_3(\chi)=2i\{\chi,h_3\}=6i b_0\bar z^2\frac{\partial \chi}{\partial z}-
6i b_0 z^2\frac{\partial \chi}{\partial \bar z}\,.
\end{equation}
This formula implies that $L(\mathcal H^3_m)\subset\mathcal H^3_{m+1}$.
In order to study the normal form we need a description of the complement
to the image. Uniqueness properties are related to the properties of
the kernel. A standard result from Linear Algebra provides
the following relation:
\[
\mbox{co-dim\,Image}(L_3)=
\dim\mathcal H^3_{m+1}-\dim\mathcal H^3_m+\dim\ker L_3\,.
\]
Obviously $L_3(h_3^k)=2i\{h_3^k,h^3\}=0$ for any $k\in\mathbb N$
and therefore the kernel of $L_3$ restricted on $\mathcal H^3_{3k}$
is not trivial.
The results of the study of $L_3$ are summarised in Table~\ref{Ta:n3}.

\begin{table}
\begin{center}
\begin{tabular}{|c|c|c|c|c|}
\hline
$m$ & $\dim\mathcal H^3_m$ & $\dim\mathcal H^3_{m+1}$ & $\dim\ker L_3$ &
$\mathrm{co{-}dim\ Image}\,L_3$ \\
\hline
$3k$   & $k+1$ & $k$ &   $1$ & $0$ \\
$3k+1$ & $k  $ & $k+1$ & $0$ & $1$ \\
$3k+2$ & $k+1$ & $k+2$ & $0$ & $1$   \\
\hline
\end{tabular}
\end{center}
\caption{Properties of the homological operator for $n=3$.\label{Ta:n3}}
\end{table}

It is convenient to write a resonant monomial of order $m$ in the form
\[
Q_{mj}:=
z^{(m+3j)/2}\bar z^{(m-3j)/2}
\]
where $-\ipart{\frac m3}\le j \le \ipart{\frac m3}$ and $j=m\pmod2$.
Therefore for a fixed $m$ the index $j$ changes with step $2$.
A direct substitution to the definition of $L_3$ shows
\begin{eqnarray*}
L_3(Q_{mj})&=&
6i b_0 \frac{m+3j}{2} z^{(m+3j)/2-1}\bar z^{(m-3j)/2+2}
\\&&
-
6i b_0 \frac{m-3j}{2} z^{(m+3j)/2+2}\bar z^{(m-3j)/2-1}
\\&=&
6i b_0 \frac{m+3j}{2} Q_{m+1,j-1}
-
6i b_0 \frac{m-3j}{2} Q_{m+1,j+1}\,.
\end{eqnarray*}
The action of $L_3$ is represented in the diagram shown on Figure~\ref{Fig:dia3}.
We see that for all $m$ the matrix of $L_3$ is two diagonal. Analysing the
cases of $m=3k$, $m=3k+1$ and $m=3k+2$ separately, we see that since $b_0\ne0$
the rank of the matrix is maximal and equals to $k$, $k$ and $k+1$ respectively.
Consequently,
if $m\ne0\pmod 3$ the kernel is empty, and if $m=0\pmod 3$ the kernel is
one dimensional and generated by $h_3^{k}$.

\begin{figure}
\begin{center}
\includegraphics[width=0.7\textwidth]{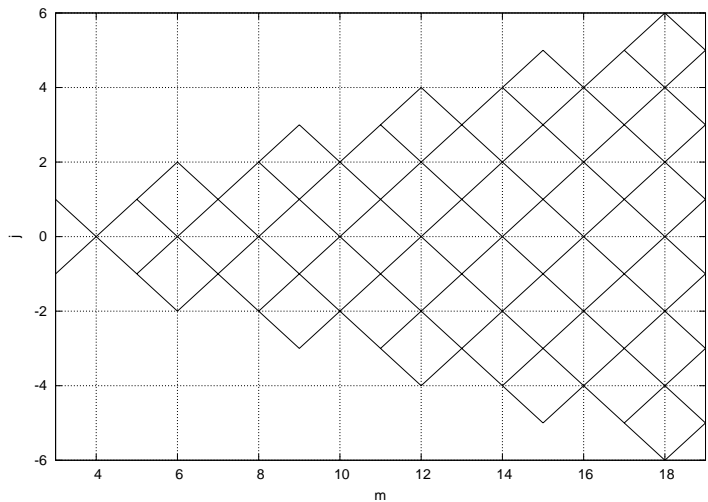}
\end{center}
\caption{Action of the operator $L_3:\mathcal H^3_m\to \mathcal H^3_{m+1}$ 
on monomials.
Each monomial is connected by a line (or two lines) to its image. \label{Fig:dia3}}
\end{figure}

We see that the image of $L_3$ completely covers $\mathcal H^3_{p}$ with $p=1\pmod 3$
and has one dimensional complement otherwise. Taking into account the structure
of the matrix of $L_3$ we see that the complements are generate either by $Q_{p,0}$
if $p$ is even, or by $Q_{p,1}+Q_{p,-1}$ if $p$ is odd.

Now we follow the same strategy we used in the previous two sections.
We construct inductively a sequence of substitutions:
\[
\tilde h=\exp(L_{\chi_{p-1}})h=h+L_3(\chi_{p-1})+O_{2p-3}
\]
and choose $\chi_{p-1}$ in such a way that $[\tilde h]_p$ is in the complement space
to $L_3(\mathcal H^3_{p-1})\subset\mathcal H^3_p$. We see that $[\tilde h]_p=0$
if $p=1\pmod 3$, and provided $p\ne1\pmod3$ we get
\begin{eqnarray*}
[\tilde h]_p&=&a_k z^{k+3}\bar z^{k+3}\qquad\qquad\mbox{for $p=2k+6$}\,,
\\{}
[\tilde h]_p&=&b_k z^{k}\bar z^k(z^{3}+\bar z^3)
\qquad\mbox{for $p=2k+3$,}
\end{eqnarray*}
for some $a_k,b_k\in\mathbb R$.
We have chosen $k$ in such a way that $k=0$ corresponds to the lowest
non-zero order. Indeed, the lowest odd order in $\tilde h$
is obviously 3, and the lowest
even order is $6$ and not $4$ because $4=1\pmod3$.

Repeating the argument inductively, we show that $h$ can be transformed to
the form~(\ref{Eq:complexnf3}). Uniqueness follows from the fact that
the kernel of $L_3$ is generated by powers of $h_3$ only
and we omit it since it repeats literally arguments
from the proofs of Propositions~\ref{Pro:n5} and~\ref{Pro:n4}.
\end{proof}

\section{Quasi-resonant normal forms\label{Se:quasires}}

In this section we prove Theorem \ref{Thm:qr}.
Instead of the individual map $F_0$ we consider an analytic family of
area-preserving maps $F_\varepsilon$, which coincides with $F_0$ at $\varepsilon=0$.
Without loosing in generality we assume that for all $\varepsilon$
the fixed point is at the origin. Then in the complex variables $(z,\bar z)$
the first component of $F_\varepsilon$ can be written in the form of a Taylor series:
\[
f_\varepsilon (z, \bar z)= \mu z+
\sum_{\substack{k+l+j\ge2\\ k,l,j\ge0}}
c_{klj}z^k\bar z^l \varepsilon^j \,,
\]
where $\mu$ is the multiplier of $F_0$.
We see that the series involves three variables
$(z,\bar z, \varepsilon )$ instead of two variables $(z,\bar z)$.
Moreover, the classical normal form theory implies that there is a formal
change of variables which eliminates all non-resonant terms from this sum.
We will assume that this change of variables has been done.

As in the Theorem \ref{Theorem_Interpol} it can be shown that there exists
a unique real-valued formal Hamiltonian
\begin{equation} \label{heps}
h(z,\bar z; \varepsilon )=\sum_{\substack{k+l+j\ge3\\ k,l,j\ge0}}
h_{klj}z^k\bar z^l \varepsilon^j \,
\end{equation}
such that
\begin{equation*}
f_\varepsilon=\mu\exp(L _{h_\varepsilon})z.
\end{equation*}
We note that the sum in (\ref{heps}) contains only resonant terms $k=l\pmod n$
(assuming $\mu^n=1$).
The proof of this statement is similar to the proof provided
in Section \ref{Se:fi} for the case of an individual map.
Of course, one should consider homogeneous polynomials in three variables.

\begin{proposition}\label{Pro:quasi}
If $h_{n00}\ne0$, there is a formal canonical change
of variables which transforms $h(z,\bar z; \varepsilon )$
into
\begin{equation}
\widetilde{h}(z,\bar z; \varepsilon )
=
z\bar{z}A(z\bar{z}; \varepsilon)+(z^n+\bar z^n)B(z\bar{z}; \varepsilon),
\end{equation}
where $A$ and $B$ are series in two variables with real coefficients:
\begin{itemize}
\item
if $n\ge4$ and $h_{220}h_{n00}\ne0$
\begin{equation}
A(z\bar{z},\varepsilon)=\sum_{km\ge0}a_{km} z^k\bar z^k\,\varepsilon^m ,\quad
B(z\bar{z},\varepsilon)=\sum_{km\ge0}b_{km}z^k\bar z^k\varepsilon ^m\,, 
\quad a_{00}=0,
\end{equation}
\item
if $n=3$ and $h_{300}\ne0$
\begin{eqnarray}
A(I,\varepsilon)&=&
\sum_{\substack{km\ge0\\ k\ne 1\pmod 3}}a_{km}z^k\bar z^k\varepsilon^m\,,\qquad
a_{00}=a_{10}=0\,,\\
B(I,\varepsilon)&=&
\sum_{\substack{km\ge0\\ k\ne 2\pmod 3}}b_{km}z^k\bar z^k\varepsilon^m\,,
\end{eqnarray}
\end{itemize}
where $b_{00}=|h_{n00}|$.
Moreover the coefficients of the series $A$ and $B$ are unique.
\end{proposition}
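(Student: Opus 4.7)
The proof will extend the homological-operator machinery developed in Sections~\ref{Se:n5}, \ref{Se:n4}, and~\ref{Se:n3} to the three-variable setting $(z, \bar z, \varepsilon)$ by treating $\varepsilon$ as an additional formal variable. Since the Poisson bracket involves only differentiation in $z$ and $\bar z$, the algebraic structure used to prove the individual-map propositions carries over essentially unchanged, and the new content is a bookkeeping of dimensions and of kernels after inclusion of $\varepsilon$.

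First I would perform the $\varepsilon$-independent rotation $z \mapsto e^{-i\arg(h_{n00})/n}\,z$ so that $b_{00} = |h_{n00}|$ becomes real and positive, starting from the already ``resonant-only'' Hamiltonian \eqref{heps}. Next I would declare $\delta(\varepsilon) = 1$, so that $\varepsilon$ carries the same $\delta$-weight as the invariant $z\bar z$, and redefine $\mathcal{H}^n_m$ to be the space of real-valued resonant polynomials in $(z, \bar z, \varepsilon)$ of total $\delta$-order $m$. This space decomposes as a direct sum $\mathcal{H}^n_m = \bigoplus_{j \ge 0} \varepsilon^j \widehat{\mathcal{H}}^n_{m-j}$, where $\widehat{\mathcal{H}}^n_q$ denotes the $\varepsilon$-independent resonant polynomials of $\delta$-order $q$ studied in the previous sections. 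Because $\{\varepsilon^j P, \varepsilon^k Q\} = \varepsilon^{j+k}\{P,Q\}$, Lemma~\ref{Le:ordershift} transfers verbatim, and the homological operator $L_p$ based on the leading quadratic part $h_2$ of $h|_{\varepsilon=0}$ preserves the $\varepsilon$-grading layer by layer.

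The inductive construction is then a direct rerun of whichever of the proofs of Propositions~\ref{Pro:n5}, \ref{Pro:n4}, \ref{Pro:n3} applies. At each $\delta$-order $p \ge 3$, I would solve the homological equation on $\mathcal{H}^n_p$ for a generating polynomial $\chi_p$ so that $[\tilde h]_{p+1}$ lies in a fixed complement to $L_p(\mathcal{H}^n_p)$ inside $\mathcal{H}^n_{p+1}$. Since $L_p$ respects the $\varepsilon$-layering, the choice of complement splits into one sub-problem per value of $j \ge 0$, each of which is identical to the $\varepsilon=0$ problem already solved in the previous sections. The union of the resulting complements yields precisely the monomials listed in the proposition; in particular the surviving $(z\bar z)^k \varepsilon^m$ and $(z^n+\bar z^n)(z\bar z)^k\varepsilon^m$ terms match the claimed form of $A$ and $B$.

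The main obstacle is uniqueness, which is also where the normalization $a_{00} = 0$ (and, for $n = 3$, additionally $a_{10} = 0$) becomes essential. On the $j$-th $\varepsilon$-layer the kernel of $L_p$ is generated by $\varepsilon^j h_2^k$ rather than by $h_2^k$ alone, so the kernel of $L_p$ on $\mathcal{H}^n_p$ grows with every additional power of $\varepsilon$; the stated vanishing conditions on $a_{00}$ and $a_{10}$ are exactly what pin this additional freedom down. I would conclude by mimicking the closing argument of Proposition~\ref{Pro:n5}: if $\tilde h$ and $\tilde h'$ are two normalized Hamiltonians related by $\exp(L_\chi)$, then the lowest-order part of $\chi$ must lie in $\ker L_p$, hence can be written as a linear combination $\sum_j \alpha_j\, \varepsilon^j\, \tilde h^{\,k_j}$. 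Using the three-variable analogue of Theorem~\ref{Theorem_Interpol} (a routine extension of its proof, obtained by replacing homogeneous polynomials in $(z,\bar z)$ by $\delta$-homogeneous polynomials in $(z,\bar z,\varepsilon)$), each such kernel piece is absorbed into a reparametrization whose lowest $\delta$-order is strictly larger, and induction forces $\tilde h = \tilde h'$ termwise.
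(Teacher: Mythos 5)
Your proposal follows essentially the same route as the paper: treat $\varepsilon$ as an extra formal variable on which the Poisson bracket does not act, observe that the homological operator based on the $\varepsilon$-independent leading part $h_{2,0}$ (resp.\ $h_{3,0}$) respects the $\varepsilon$-filtration, and rerun the constructions of Propositions~\ref{Pro:n5}, \ref{Pro:n4}, \ref{Pro:n3} layer by layer; your grading with $\delta(\varepsilon)=1$ is just a repackaging of the paper's double induction over the total order $s$ and the $\varepsilon$-power $j$ (note only that the layers at a fixed total order are not fully independent --- normalising layer $j$ feeds corrections into layers $j'>j$ of the same total order, e.g.\ via the bracket with $\varepsilon h_{1,1}$ --- so they must be processed in increasing $j$, exactly the paper's ordering). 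One assertion should be corrected: the conditions $a_{00}=0$ and, for $n=3$, $a_{10}=0$ are \emph{not} normalisations imposed to pin down the enlarged kernel. They are automatic: $a_{00}=0$ because the interpolating Hamiltonian (\ref{heps}) contains no terms with $k+l+j=2$, and $a_{10}=0$ for $n=3$ because the monomial $z^2\bar z^2$ at $\varepsilon^0$ lies in the image of $L_3:\mathcal H^3_3\to\mathcal H^3_4$ and is eliminated. The extra kernel elements $\varepsilon^j h_{2,0}^{k}$ are instead disposed of exactly as in the individual-map case, by the absorption argument you correctly sketch at the end (any function of $\tilde h$ and $\varepsilon$ generates a flow acting trivially on $\tilde h$), so this misattribution does not affect the validity of your construction.
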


\begin{proof}%[Proof of Theorem~\ref{Thm:qr}]
The scheme of the proof is similar to the previous sections.
The unique normal form is constructed by induction: we use
a sequence of canonical transformations to eliminate as many
resonant terms as possible.
As in the previous sections we first use the rotation
$z\mapsto e^{-i\arg(h_{n00})/n }\,z$ which transforms
\[
h_{n00}z^n + h_{0n0} \bar z^{n} \mapsto b_{0}(z^n+\bar z^{n}),
%\label{n=4_LeadingOrder}
\]
where $b_{00}=|h_{n00}|$.

First we consider the case of $n\ge 4$. The terms in (\ref{heps}) can be grouped
in the following way:
\[
%\chi_p=\sum_{m=0}^{p-2}\varepsilon^m \chi_{p-m,m}
h(z,\bar z; \varepsilon )=
\sum_{s\ge2}\sum_{j=0}^{s-1}\varepsilon^j h_{s-j,j}
\]
where $h_{s-j,j}\in\mathcal H^n_{s-j}$. We note that
after the rotation the terms with $s=2$ already have the
desired form:
\[
h_{2,0}= b_{00}(z^n+\bar z^{n})\qquad\mbox{and}\qquad h_{1,1}=h_{111}z\bar z \, .
\]
and we simply let $a_{11}=h_{111}$.
We will simplify the terms of the formal series in the following order:
for each fixed $s$ starting with $s=3$, we will run $j$ from $0$ to $s-1$.
Following this order, we perform a sequence of canonical
coordinate changes generated by an auxiliary Hamiltonian
$\chi_{p-k,k} \in\mathcal H^n_{p-k}$:
\[
(z,\bar z) \mapsto \Phi ^{\varepsilon^k}_{\chi_{p-k,k}}(z,\bar z)
\,.
\]
This is a canonical change of variables and the Hamiltonian $h$ is transformed into
\[
\tilde h=\exp\left(\varepsilon^k \chi_{p-k,k}\right) h\,.
\]
The function $\chi_{p-k,k}$ will be chosen to normalize the
term $\varepsilon^k h_{p-k+1,k}$. Writing Lie series for the
transformed Hamiltonian we get
\[
\tilde h=h+\varepsilon^kL_{\chi_{p-k,k}}h+
\sum_{l\ge2}\varepsilon^{kl}\frac1{l!}L^l_{\chi_{p-k,k}}h\,.
\]
It is easy to check that
\[
\tilde h_{s-j,j}=h_{s-j,j}
\]
for $s \le p$ $(0 \le j \le s-1)$
and for $s=p+1, j<k$.
For $s=p+1, j=k$ we get
\begin{equation}\label{he}
\tilde h_{p-k+1,k}=h_{p-k+1,k}+{\left[ L_{\chi_{p-k,k}}h_{2,0} \right]}_{p-k+1}.
 %=h_{p-k+1,k}+L_2\chi_{p-k,k} \, ,
\end{equation}
Since $h_{2,0}=h_{2,2,0} z^2 \bar z^2 + b_{00}(z^n+\bar z^{n})$ agrees with
$h_2$ in (\ref{Eq:h2}) for $n>4$ or (\ref{n=4_LeadingOrder}) for $n=4$
we can rewrite the formula using the homological operator $L_{p-k}$ ($L_{p-k}=L_2$
 for $n=4$):
\[
\tilde h_{p-k+1,k}=h_{p-k+1,k}+L_{p-k}\chi_{p-k,k} \, .
\]
The explicit description
for the complements of the homological operators was provided
in Sections \ref{Se:n5}  and \ref{Se:n4} for $n\ge5$ and $n=4$ respectively.
So there exists $\chi_{p-k,k} \in\mathcal H^n_{p-k}$ such that $\tilde h_{p-k+1,k}$
takes the form
(\ref{Eq:podd}) if $p-k$ is odd and (\ref{Eq:peven}) if $p-k$ is even.

In the case of the third order resonance the Hamiltonian (\ref{heps})
can be written as
\[
h(z,\bar z; \varepsilon )=
\sum_{s\ge3}\sum_{j=0}^{s-1}\varepsilon^j h_{s-j,j} \, ,
\]
where $h_{s-j,j}\in\mathcal H^3_{s-j}$. Then we continue in the same way
as in the case of $n\ge 4$ but Equation (\ref{he}) is replaced with
\[
\tilde h_{p-k+1,k}=h_{p-k+1,k}+ L_{\chi_{p-k,k}}h_{3,0} .
\]
Since $h_{3,0}=h_3$ then $L_{\chi_{p-k,k}}h_{3,0} = L_3(\chi_{p-k,k})$,
where the homological operator $L_3$ is defined by (\ref{L3}).
Using the results of Section \ref{Se:n3} we get
\[
\tilde h_{p-k,k}=0 \quad {\rm for \ }  p-k=1\pmod3
\]
and if $p-k \ne 1\pmod3$
\begin{eqnarray*}
[\tilde h]_{p-k,k}&=&a_{l,k} z^{l+1}\bar z^{l+1}\qquad\qquad\mbox{for $p-k=2l+6$; $a_{0,0}=a_{1,0}=0$}\,,
\\{}
[\tilde h]_{p-k,k}&=&b_{l,k} z^{l}\bar z^l(z^{3}+\bar z^3)
\qquad\mbox{for $p-k=2l+3$.}
\end{eqnarray*}
%After the reversion to the coordinates $(I, \varphi )$ the Hamiltonian takes the form (\ref{Heps}).

The proof of the uniqueness is very similar to the previous sections. Suppose
$h$ can be transformed into two different simplified normal forms
$\tilde h$ and $\tilde h'$. The formal Hamiltonian $\chi$ in (\ref{Eq:subst})
now takes the form:
\[ \chi = \sum_{s\ge p}\sum_{j=0}^{s-1}\varepsilon^j \chi_{s-j,j} \]
and the lowest order is given by the first non-zero term $\varepsilon^k \chi_{p-k,k}$.
Then  \[\tilde h_{s-j,j}=\tilde h'_{s-j,j} \ {\rm for} \  s \le p \ (0 \le j \le s-1) \
{\rm and} \ {\rm for} \  s=p+1, j<k\] and
%coincide up to
\[
\tilde h'_{p-k+1,k}=\tilde h_{p-k+1,k}+L_{p-k}\chi_{p-k,k} \, ,
\]
where $L_{p-k}$ is the homological operator
($L_{p-k}=L_2$ for $n=4$ and $L_{p-k}=L_3$ for $n=3$).
As it was shown in Sections \ref{Se:n5} and \ref{Se:n4}
the knowledge of the kernel of the  homological operator
allows us to show the existence of $\tilde \chi$ which conjugates $\tilde h$
and  $\tilde h'$ but starts at least from the next order in comparison with $\chi$.
Consequently $\tilde h'_{p-k+1,k}=\tilde h_{p-k+1,k}$ and we conclude
$\tilde h=\tilde h'$ by induction.

\end{proof}

\end{document}